\newcommand{\grad}[1]{{\rm Grad}(#1)}
\newcommand{\f}[0]{\varphi}
\newcommand{\N}[0]{{\mathbb N}}
\newcommand{\pd}[0]{\partial}
\newcommand{\R}[0]{\mathbb{R}}
\newcommand{\cI}[0]{{\mathcal I}}
\newcommand{\cL}[0]{{\mathcal L}}
\newcommand{\cR}[0]{{\mathcal R}}
\newcommand{\nor}[1]{\| #1 \|}
\newcommand{\Nor}[1]{\left |\hskip -0.7pt\left | #1 \right |\hskip -0.7pt\right |}
\newcommand{\mn}[2]{\{ #1\, ;#2 \}}
\newcommand{\sk}[2]{\langle #1 , #2\rangle}
\newcommand{\Sk}[2]{\Big\langle #1 , #2\Big\rangle}
\newtheorem{theorem}{Theorem}
\newtheorem{lema}[theorem]{Lemma}
\newtheorem{proposition}[theorem]{Proposition}
\newtheorem{cor}[theorem]{Corollary}
\newtheorem{definition}[theorem]{Definition}
\renewcommand\leq[0]{\leqslant}
\renewcommand\geq[0]{\geqslant}
\renewcommand\epsilon[0]{\varepsilon}
\newcommand\wrt{\,\text{\rm d}}
\renewcommand\mod[1]{\left\vert{#1}\right\vert}
\newcommand{\esss}{\mathop{\text{ess sup}\,}}
\newcommand\norm[2]{{\left\Vert{#1}\right\Vert_{#2}}}
\newtheorem{preremark}[theorem]{Remark}  \newenvironment{remark}
{\begin{preremark}\rm}{\end{preremark}}
\begin{document}

\title[Linear dimension-free estimates in a theorem of D. Bakry]{Bellman function and linear dimension-free estimates in a theorem of Bakry}
 
\author[Carbonaro]{Andrea Carbonaro}

\author[Dragi\v{c}evi\'c]{Oliver Dragi\v{c}evi\'c$^{1}$
}

\address{Andrea Carbonaro\\ University of Birmingham\\ School of Mathematics\\ Watson Building\\ Edgbaston\\ Birmingham B15 2TT\\ United Kingdom}
\curraddr{Dipartimento di Matematica\\ Universit\`a degli Studi di Genova\\ Via Dodecaneso 35, 16146 Genova, Italy}
\email{carbonaro@dima.unige.it}

\address{Oliver Dragi\v{c}evi\'c
 \\University of Ljubljana\\ Faculty of Mathematics and Physics\\ Jadranska 19, SI-1000 Ljubljana\\ Slovenia}
\email{oliver.dragicevic@fmf.uni-lj.si}

\subjclass[2010]{53C21, 58J40, 58J05}

\keywords{Bakry-Emery Ricci curvature, Riesz transforms, Bellman functions.}

\maketitle
\footnotetext[1]{Partially supported by the Ministry of Higher Education,
Science and Technology of Slovenia (research program Analysis and
Geometry, contract no. P1-0291).}

\begin{abstract}
By using an explicit Bellman function, we prove a bilinear embedding theorem for the Laplacian associated with a weighted Riemannian manifold $(M,\mu_\varphi)$ having the Bakry-Emery curvature bounded from below. The embedding, acting on the Cartesian product of $L^p(M,\mu_\varphi)$ and $L^q(T^*M,\mu_\varphi)$, $1/p+1/q=1$, involves estimates which are independent of the dimension of the manifold and linear in $p$. As a consequence we obtain linear dimension-free estimates of the $L^p$ norms of the corresponding shifted Riesz transform. All our proofs are analytic.
\end{abstract}

\section{Introduction}
\label{intro}
Consider a complete Riemannian manifold $(M,g,\mu_0)$ with Riemannian metric $g$ and Riemannian measure $\mu_0$. Let $\wrt$, $\nabla$, ${\rm Grad}$ and $\Delta$ denote, respectively, the exterior and the covariant derivative, the gradient, and the nonnegative Laplace-Beltrami operator on $M$. Given $\varphi\in C^\infty(M)$, consider the weighted measure on $M$ defined by
$$
\wrt \mu_\varphi(x)=e^{-\varphi(x)}\wrt \mu_0(x),
$$
and denote by $\cL_\varphi$ the nonnegative weighted Laplacian defined on $C^\infty_c(M)$ by
$$
\cL_\varphi f=\Delta f+\wrt f(\grad{\varphi} ).
$$
It was proved in \cite{Bakry1,S} that $\cL_\varphi$ is essentially self-adjoint on $L^2(M,\mu_\varphi)$, and with an abuse of notation we still denote by $\cL_\varphi$ its unique self-adjoint extension. The Bakry-Emery curvature tensor associated with $\cL_\varphi$ is defined by
$$
{\rm Ric}_\varphi={\rm Ric}+\nabla^2\varphi,
$$
where ${\rm Ric}$ denotes the Ricci curvature tensor on $M$. For every $a\in\R$, consider the shifted Riesz transform defined by
$$
\cR_a=\wrt (a^2\cI+\cL_\varphi)^{-1/2}.
$$
The following well-known result was first proven by D. Bakry \cite{Bakry1}.
\begin{theorem}
\label{Bakry}
Suppose that ${\rm Ric}_\varphi\geqslant -a^2g$. Then, for every $p$ in $(1,\infty)$, there is $C(p)>0$ such that
$$
\norm{\cR_af}{L^p(T^*M,\mu_\varphi)}\leqslant C(p)\norm{f}{L^p(M,\mu_\varphi)},
$$
for all $f\in \overline{{\rm R}(a^2\cI+\cL_\varphi)\cap L^p(M,\mu_\varphi)}^{L^p}$.
\end{theorem}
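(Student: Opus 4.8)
The plan is to prove a stronger statement — a bilinear embedding theorem — from which Theorem \ref{Bakry} with a linear-in-$p$ constant follows by duality. The strategy rests on the heat semigroup $P^\varphi_t = e^{-t\cL_\varphi}$ and the Littlewood--Paley--Stein machinery, but with the key point that all estimates are driven by a single, explicitly constructed Bellman function rather than by spectral multiplier theory or vector-valued Calder\'on--Zygmund arguments (which would introduce dimensional or superlinear dependence on $p$).

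\smallskip

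\textbf{Step 1: Reduction to a bilinear embedding.} First I would recall the classical identity expressing the shifted Riesz transform through the subordinated semigroup or, more efficiently here, directly through a square-function estimate: for suitable $f,h$,
\[
\Sk{\cR_a f}{h} = c\int_0^\infty \int_M \sk{\wrt \bigl(a^2\cI + \cL_\varphi\bigr)^{-1/2} \,\text{stuff}}{\cdots}\, ,
\]
so that $\norm{\cR_a f}{L^p}$ is controlled once one proves a bilinear inequality of the form
\[
\int_0^\infty\!\!\int_M \Bigl(\mod{\wrt Q_t f}\,\mod{\wrt Q_t h} + a^2\mod{Q_tf}\,\mod{Q_th}\Bigr)\,t\wrt\mu_\varphi\wrt t \;\leq\; C\,p\;\norm{f}{L^p(M,\mu_\varphi)}\,\norm{h}{L^q(M,\mu_\varphi)},
\]
where $Q_t$ is built from $e^{-t\sqrt{a^2\cI+\cL_\varphi}}$ (or its Poisson analogue). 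The shift $a^2\cI$ is precisely what makes the Bakry--Emery lower bound $\mathrm{Ric}_\varphi\geq -a^2 g$ usable as a \emph{nonnegativity} condition after the Bochner--Weitzenb\"ock identity.

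\smallskip

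\textbf{Step 2: The Bellman function and its differential inequalities.} The heart of the argument is to exhibit an explicit function $B=B(\zeta,\eta)$ on a subdomain of $\R_{+}\times\R_{+}$ (a variant of the Nazarov--Treil--Volberg / Dragi\v{c}evi\'c--Volberg Bellman function, e.g. $B(\zeta,\eta)\sim \zeta^p+\eta^q+\text{a cross-term}$ tuned so that the relevant Hessian is comparable to $p$) satisfying: (i) the size bound $0\leq B(\zeta,\eta)\lesssim \zeta^p+\eta^q$; and (ii) a pointwise concavity/convexity estimate guaranteeing that for the pair $(u,v)=(P^\varphi_t(\mod{f}),P^\varphi_t(\mod{h}))$ — or the Poisson extension thereof — the composite $b(x,t):=B(u,v)$ obeys
\[
-\Bigl(\pd_t^2 + \cL_\varphi\Bigr)\, b \;\geq\; \frac{2}{p}\,\bigl(\mod{\nabla u}\,\mod{\nabla v} + a^2 uv\bigr)
\]
in an appropriate weak sense, where the $a^2 uv$ term is produced exactly by feeding the curvature bound $\mathrm{Ric}_\varphi\geq -a^2 g$ into the Bochner formula $\cL_\varphi\mod{\wrt u}^2 = 2\sk{\nabla\wrt u}{\wrt u} - 2\mathrm{Ric}_\varphi(\mathrm{Grad}\,u,\mathrm{Grad}\,u) - 2\mod{\He u}^2$. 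Verifying (ii) is a finite-dimensional calculus inequality about $B$ and does not see the dimension of $M$; the constant that emerges is $O(p)$, which is the source of the linearity claim.

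\smallskip

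\textbf{Step 3: Integration by parts / stochastic or analytic running of the Bellman argument, and the main obstacle.} With $B$ in hand I would integrate the differential inequality of Step 2 over $M\times(0,\infty)$ against $t\wrt\mu_\varphi\wrt t$, using the heat-flow (or Green's-function) representation to kill the boundary terms and the Laplacian term by self-adjointness, leaving $\int\!\int t\,\cdot\bigl(\mod{\nabla u}\mod{\nabla v}+a^2uv\bigr) \lesssim p\bigl(\norm{f}{p}^p+\norm{h}{q}^q\bigr)$, and then homogenizing. The technical heart — and the step I expect to be the main obstacle — is making this integration rigorous on a \emph{non-compact, possibly unbounded-geometry} weighted manifold: one must justify the integrations by parts (no boundary term at spatial infinity), control the behavior of $B(u,v)$ and its derivatives where $B$ may fail to be smooth or where $(u,v)$ approaches the boundary of the domain of $B$, and handle the density/closure issue implicit in the hypothesis $f\in\overline{\mathrm{R}(a^2\cI+\cL_\varphi)\cap L^p}^{L^p}$. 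Standard remedies — exhaustion of $M$ by relatively compact sets with cutoffs, a priori regularization of $B$ (adding $\delta(\zeta^2+\eta^2)$ or mollifying), and exploiting the conservativeness / stochastic completeness of $P^\varphi_t$ together with heat-kernel bounds valid under a Bakry--Emery lower bound — should suffice, but assembling them cleanly is where the real work lies. Once the bilinear embedding is established, Theorem \ref{Bakry} follows: dualize to recover $\norm{\cR_a f}{L^p(T^*M,\mu_\varphi)}\leq C(p)\norm{f}{L^p(M,\mu_\varphi)}$ with $C(p)=O(p)$ as $p\to\infty$ and, by the usual self-improvement/duality between $p$ and $q$, the stated bound on the indicated dense subspace.
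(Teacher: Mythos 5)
Your skeleton — reduce to a bilinear embedding via the Poisson semigroup, verify a pointwise differential inequality for a Bellman function $B$ composed with the flow, invoke Bochner to let the curvature bound kill the shift $a^2$, then integrate by parts carefully on a noncompact manifold — is exactly the architecture of the paper. So the overall route is correct. However, there is one genuine structural gap in your set-up, and a couple of smaller misstatements worth flagging.

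\textbf{The main gap: you pair a scalar with a scalar, but the duality requires a one-form.} In Step 1 you reduce $\norm{\cR_af}{L^p}$ to a bilinear inequality involving $Q_tf$ and $Q_th$ with \emph{both} $f$ and $h$ scalar functions, and in Step 2 you compose $B$ with the pair $(u,v)=\big(P^\varphi_t|f|,P^\varphi_t|h|\big)$. This does not suffice. Since $\cR_af=\wrt(a^2\cI+\cL_\varphi)^{-1/2}f$ is a $1$-form, the $L^p(T^*M)$ norm is computed by pairing against arbitrary $\omega\in L^q(T^*M,\mu_\varphi)$, not against $\wrt h$ for scalar $h$. The obvious fix — use the Hodge decomposition $\omega=\wrt g+\omega'$ and keep only the exact part — would require the $L^q$ Hodge decomposition, which in turn needs $L^q$ boundedness of a Riesz-type operator; that is circular. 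The correct object is what the paper calls $\vec{P^a_t}\omega=\exp\!\big(-t(a^2\cI+\vec{\cL_\varphi})^{1/2}\big)\omega$, the subordinated semigroup generated by the weighted Hodge Laplacian $\square_{1,\varphi}$ on $1$-forms. Accordingly, the Bellman function must take a scalar first argument $\zeta=P^a_tf$ and a \emph{vector-valued} (fiber of $T^*M$) second argument $\eta=\vec{P^a_t}\omega$, i.e.\ $Q:\R\times\R^n\to\R_+$ with $Q(\zeta,\eta)=\tfrac12\beta(|\zeta|,|\eta|)$. The intertwining $\wrt P^a_t=\vec{P^a_t}\wrt$ then converts the $\R_a$--$\omega$ pairing into a $(\wrt P^a_tf,\partial_t\vec{P^a_t}\omega)$ integral (the identity $\sk{\cR_af}{\omega}_{L^2}=4\int_0^\infty\sk{\wrt P^a_tf}{\partial_t\vec{P^a_t}\omega}_{L^2}\,t\,\wrt t$ is the precise form of what your ``stuff'' placeholder gestures at), and the domination $|\vec{P^a_t}\omega|\leqslant P^0_t|\omega|$ and $|P^a_tf|\leqslant P^a_t|f|$ are what let the size and derivative bounds of the mollified $\beta_\kappa$ be applied.

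\textbf{Two smaller misstatements.} (1) The Bochner input in the paper is the Bochner--Weitzenb\"ock identity for the $1$-form $\eta=\vec{P^a_t}\omega$, namely $-\tfrac12\cL_\varphi(|\eta|^2)=|\nabla\eta|^2-\langle\vec{\cL_\varphi}\eta,\eta\rangle+{\rm Ric}_\varphi(\sharp\eta,\sharp\eta)$; the scalar Bochner formula for $|\wrt u|^2$ that you quote (with the sign you wrote) is not what is used, because the curvature term must sit on the form side. After plugging in $\cL'_\varphi P^a_tf=-a^2P^a_tf$, $\vec{\cL'}_\varphi\vec{P^a_t}\omega=-a^2\vec{P^a_t}\omega$, and ${\rm Ric}_\varphi(\sharp\eta,\sharp\eta)\geqslant -a^2|\eta|^2$, all the $a^2$ contributions cancel and the surviving lower bound is $-\cL'_\varphi\, b\geqslant 2\delta\,|\overline\nabla P^a_tf|\,|\overline\nabla\vec{P^a_t}\omega|$; there is no residual $a^2uv$ term as in your Step 2. (2) The Hessian constant is $\delta=q(q-1)/8\sim(p-1)^{-1}$, i.e.\ the convexity degenerates like $1/p$; the linear constant $C(p)\sim p$ then comes from dividing by $\delta$, not from a Hessian ``comparable to $p$''. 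Once these corrections are made, your Step 3 — mollification of $B$, cutoffs $F_l$ built from the distance function, the Bakry--Emery Laplacian-comparison bound $-r\cL_\varphi r\lesssim r^2$ to control $\cL_\varphi F_l$, analyticity of $P^a_t$ and $\vec{P^a_t}$ to kill the $t\to 0,\infty$ boundary terms — is in line with what the paper actually does and would close the proof.
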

This result has been improved by Li in \cite[Theorem 1.4]{Li}, where the author obtained an explicit upper
estimate, namely,
$C(p)=2(p^*-1)(1+4\|\tau\|_p)$.
Here
$p^*=\max\{p,q\}$, $1/p+1/q=1$,
and $\tau$ is the exit time of the standard 3-dimensional Brownian motion from the unit ball in $\R^3$.
One can determine the asymptotic behaviour of $\nor{\tau}_p$
by means of the distribution function
of $\tau$, which has been calculated by Ciesielski and Taylor \cite{CT}. As a result one quickly computes that
$
\nor{\tau}_p\sim p
$
as $p\rightarrow\infty$.
Thus the estimate in \cite{Li} is quadratic in $p$ for $a\geqslant0$, except in the case $a=0$, when the author showed that it suffices to take $C(p)=2(p^*-1)$. A further improvement was made by the same author in \cite[Theorems 1.5, 1.6]{Li2} by demonstrating that, if $a>0$, one can take $C(p)=2(p^*-1)^{3/2}$. 

While the first version of this paper was under review, it was discovered by Ba\~nuelos and Baudoin \cite[Remark 2.1]{BB} that the above cited papers of Li actually contain a gap, owing to which the proofs of his explicit estimates in terms of $p$ are not correct. This gap was addressed by Li himself \cite{Li3, Li4}, while the results originally claimed by Li have recently been proven by Ba\~nuelos and Os\c ekowski \cite{BO}.

The same papers by Li \cite{Li,Li2} and Ba\~nuelos and Os\c ekowski \cite{BO} also contain a thorough review of numerous earlier results about Riesz transforms on various classes of Riemannian manifolds, as well as several applications that further motivate the pursuit of the dimension-free boundedness of Riesz transforms in such generality.

The proofs in \cite{Bakry1,Li,Li2,BO} are probabilistic, and in \cite[p. 269]{Li} the author specifically raises the question of finding an analytic proof of Theorem~\ref{Bakry}. The main objective of this paper is to give a short analytic proof of Bakry's result with explicit linear estimates in $p$ (see Corollary~\ref{c: 2}). We accomplish this by employing the technique of Bellman functions. It originates in stochastic optimal control, while it was brought into harmonic analysis
by Nazarov, Treil and Volberg in the 1994 preprint version of their paper \cite{NTV}.
Here we will follow the scheme laid out in papers \cite{DV,DV-Sch,DV-Kato}. Accordingly, the result in question will be a corollary of the so-called bilinear embedding theorem for the weighted Laplacian $\cL_\varphi$ on $M$ (see Theorem~\ref{t: 3}).
We are able to make the passage from the embedding theorem to the Riesz transforms without using spectral multipliers. This is in contrast with \cite{DV} and \cite{DV-Sch}, although one of the results there (dimension-free estimates of Riesz transforms associated with the Ornstein-Uhlenbeck operator, see \cite{DV}) is a particular case of Corollary \ref{c: 2}. In this light our method can be viewed as an improvement over \cite{DV}.

As remarked above, unlike in Li \cite{Li, Li2} and Ba\~nuelos and Os\c ekowski \cite{BO} our proofs are purely analytic. Both our estimate in Corollary \ref{c: 2} and the one in \cite{BO} are linear in $p$. However, the estimate in \cite{BO} exhibits a smaller numerical constant. A better numerical constant in our theorem could be obtained by using a ``sharper'' Bellman function. Such a function does exist, see \cite{DV} and \cite{VaVo}, but since it is not explicit, it seems much more difficult to work with. The advantage of the Bellman function we utilise in this paper (and which originated in the work of Nazarov and Treil \cite{NT}) is its simplicity and the fact that it admits satisfactory estimates of its partial derivatives (see Theorem \ref{bejaz}).

The same Nazarov-Treil Bellman function was recently used by A. Volberg and the second author \cite{DV-Sch, DV-Kato} in order to obtain similar results for (generalised) Schr\"odinger operators with nonnegative potentials, again yielding dimension-free estimates with sharp (linear) estimates in $p$ involving explicit constants.

Since the introduction of the Bellman function method in harmonic analysis by Nazarov, Treil and Volberg in mid-90's, there has been a whole series of (sharp) inequalities treated with great success by this method. Yet to the best of our knowledge this paper is the very first case of applying Bellman functions on general manifolds rather than on Euclidean spaces. As such it may open a path for a wide range of similar applications in the future. For example, we are currently studying $L^p$ spectral multipliers on weighted Riemannian manifolds by using Bellman functions techniques. This will be contained in a forthcoming paper.

We proceed to the formulation of our main results. Before we can do that precisely, we need to recall a few additional well-known notions and facts.

\section{Preliminaries}
For each $x$ in $M$, we denote the tangent and the cotangent spaces at $x$ respectively by $T_xM$ and $T^*_xM$. For every $j,k\in\N$, we set
$$
T^{j,k}_xM=\underbrace{T_xM\otimes\cdots\otimes T_xM}_{j\ times}\otimes \underbrace{T^*_xM\otimes\cdots  \otimes T^*_xM}_{k\ times},
$$
and we denote by $T^{j,k}M$ the fiber bundle over $M$ whose fibre at $x$ is $T^{j,k}_xM$. A tensor of type $(j,k)$ is just a section of $T^{j,k}M$. We denote the space of smooth tensors of type $(j,k)$ by $C^\infty(T^{j,k}M)$ and identify functions on $M$ with tensors of type $(0,0)$. For each $k=0,\hdots,\dim M$, let $\wedge^k T^*M$ denote the bundle of {\it alternating tensors} of type $(0,k)$, also referred to as $k$-forms. Recall that for every $j,k\in\N$ and $x\in M$ the Riemannian scalar product on $T_xM$ induces a scalar product $\langle\cdot,\cdot\rangle_{T^{j,k}_xM}$ on $T^{j,k}_xM$; this clearly induces a scalar product on $\wedge^kT^*_xM$, for all $k=0,\hdots,\dim M $. We set $\mod{\cdot}^2_{T^{j,k}_xM}=\langle\cdot,\cdot\rangle_{T^{j,k}_xM}$. For each $p\in [1,\infty]$ and $j,k\in\N$, let $L^p(T^{j,k}M,\mu_\varphi)$ be the Banach space of all measurable tensors $u$ of type $(j,k)$  with
$$
\|u\|_{L^p(T^{j,k}M,\mu_\varphi)}=
\begin{cases}
    \big(\int_M \mod{u(x)}_{T^{j,k}_xM}^p\ \wrt\mu_\varphi(x)\big)^{\frac{1}{p}}<\infty, &{\rm if}\quad p\in[1,\infty); \\
    \esss_{x\in M} \mod{u(x)}_{T^{j,k}_xM}<\infty, &{\rm if}\quad p=\infty.
  \end{cases}
$$
When there will be no ambiguity, we shall denote $\mod{\cdot}_{T^{j,k}_xM}$ simply by $|\cdot|$, and $L^p(T^{j,k}M,\mu_\varphi)$ by $L^p(\mu_\varphi)$. If $A$ is an operator on $L^2(\mu_\varphi)$, we denote respectively by ${\rm R}(A)$ and ${\rm N}(A)$ its range and null-space.\\

Furthermore, let
$$
\wrt:  C^\infty(\wedge^k T^*M)\rightarrow C^\infty(\wedge^{k+1}T^*M)\ \ \  {\rm and}\ \ \  \nabla:C^\infty(T^{j,k}M)\rightarrow
C^\infty(T^{j,k+1}M)
$$
be the exterior and the total covariant derivative, respectively, and $\wrt^*_\varphi$ and $\nabla^*_\varphi$ their adjoints on $L^2(\mu_\varphi)$. Recall that on functions $\wrt$ and $\nabla$ coincide with the differential, $\wrt^2=0$, and, for every $u\in T^{j,k}M$, $\eta_i\in C^\infty(T^*M)$ and $X,Y_j\in C^\infty(TM)$, $\nabla u(\eta_1,\dots,\eta_r,X,Y_1,\dots,Y_s)=\nabla_X u(\eta_1,\dots,\eta_r,Y_1,\dots,Y_s)$, where $\nabla_Xu\in T^{j,k}M$ denotes the covariant derivative of $u$ with respect to $X$. Given a system of local coordinates $(x^1,\dots,x^n)$, we set $\nabla_i=\nabla_{\partial_{x^i}}$, $i=1,\dots,n$.

An easy computation gives
$$
\wrt^*_\varphi=\wrt^*_0+i_{\grad{\varphi}},
$$
where $i_{\grad{\varphi}}$ denotes the inner multiplication by $\grad{\varphi}$ on $\wedge^{k+1} T^*M$. The (nonnegative) weighted Hodge-De Rham Laplacian acting on $k$-forms is defined by
$$
\square_{k,\varphi}=\wrt\wrt^*_\varphi+\wrt^*_\varphi\wrt.
$$
It is well known that $\square_{k,\varphi}$, initially defined on smooth $k$-forms with compact support, is essentially self-adjoint on $L^2(\wedge^kT^*M,\mu_\varphi)$ (see \cite{S}). Note that $\square_{0,\varphi}= \cL_\varphi$, and by the Bochner-Weitzenb\"ock formula we have
$$
\square_{1,\varphi}\omega=\nabla^*_0\nabla\omega+\nabla_{{\rm Grad}(\varphi)}\omega+{\rm Ric}_\varphi(\cdot,\sharp\omega),
$$
where $\sharp:T^*_xM\rightarrow T_xM$ is the duality defined by $\omega(X)=\sk{\sharp\omega}{X}_{TM}$ for all $\omega\in T^*_xM$ and $X\in T_xM$ \cite{pigola, berger}.

We set $\vec{\cL_\varphi}=\square_{1,\varphi}$, $P^a_t=\exp(-t(a^2\cI+\cL_\varphi)^{1/2})$ and $\vec{P^a_t}=\exp(-t(a^2\cI+\vec{\cL_\varphi})^{1/2})$. Note that $L^2(M,\mu_\varphi)=\overline{{\rm R}(a^2\cI+\cL_\varphi)}\oplus {\rm N}(a^2\cI+\cL_\varphi)$ where the sum is orthogonal. The Riesz transform $\cR_a$ initially defined on ${\rm R}(a^2\cI+\cL_\varphi)$ extends to a contraction
$$
\cR_a:\overline{{\rm R}(a^2\cI+\cL_\varphi)}\longrightarrow L^2(T^*M).
$$
Note that if $a>0$ then $\overline{{\rm R}(a^2\cI+\cL_\varphi)}=L^2(M,\mu_\varphi)$. Moreover, $N(\cL_\varphi)\neq \{0\}$ if and only if $\mu_\varphi(M)<\infty$; in this case $N(\cL_\varphi)=\{{\rm constant\ functions\ on}\ M\}$. When $a>0$, $\cR_a$ is often called {\it local Riesz transform}.

\begin{lema}
\label{l: commutativita}
For every $f\in C^\infty_c(M)$, $\omega\in C^\infty_c(T^*M)$, $r\geqslant 1$ and $a\geqslant 0$,
\begin{enumerate}[{\it (a)}]
\item \label{ella} $\wrt \cL_\varphi f=\vec{\cL_\varphi}\wrt f$ and $\wrt^*_\varphi\vec{\cL_\varphi}\omega=\cL_\varphi\wrt^*_\varphi\omega$;
\item \label{fitzgerald} $\wrt P^a_tf=\vec{P^a_t}\wrt f$ and $\wrt^*_\varphi\vec{P^a_t}\omega=P^a_t\wrt^*_\varphi\omega$;
\item \label{charles} $|P^a_tf(x)|^r\leqslant P^a_t|f|^r(x)$.
\end{enumerate}
If also ${\rm Ric}_\varphi\geqslant -a^2g$, then
\begin{enumerate}[{\it (a)}]
\addtocounter{enumi}{3}
\item \label{barkley} $|e^{-t\vec{\cL_\varphi}}\omega(x)|_{T^*_xM}\leqslant e^{ta^2}e^{-t\cL_\varphi}|\omega(x)|_{T^*_xM}$;
\item \label{dr.j} $|\vec P^a_t\omega(x)|^r_{T^*_xM}\leq P^0_t|\omega|^r_{T^*_xM}(x)$.
\end{enumerate}
\end{lema}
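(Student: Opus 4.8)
The plan is to prove the five parts in the order listed: (a) and (b) by intertwining, (c) by subordination, (d) --- the crux --- by a parabolic comparison argument, and (e) by combining (c) and (d). For (a), I would use that on functions $\cL_\varphi=\square_{0,\varphi}=\wrt^*_\varphi\wrt$, so that $\wrt^2=0$ gives $\wrt\cL_\varphi f=\wrt\wrt^*_\varphi\wrt f=(\wrt\wrt^*_\varphi+\wrt^*_\varphi\wrt)\wrt f=\vec{\cL_\varphi}\wrt f$ for $f\in C^\infty_c(M)$; the companion identity $\wrt^*_\varphi\vec{\cL_\varphi}\omega=\cL_\varphi\wrt^*_\varphi\omega$ then follows by taking $L^2(\mu_\varphi)$-adjoints, the essential self-adjointness of $\cL_\varphi$ and $\vec{\cL_\varphi}$ recalled in the Preliminaries taking care of the domains. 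For (b), (a) shows that $\wrt$ intertwines the resolvents of $a^2\cI+\cL_\varphi$ and $a^2\cI+\vec{\cL_\varphi}$, hence by the spectral theorem it intertwines $g(a^2\cI+\cL_\varphi)$ and $g(a^2\cI+\vec{\cL_\varphi})$ for every bounded Borel $g$; choosing $g(\lambda)=e^{-t\sqrt{\lambda}}$ gives $\wrt P^a_tf=\vec{P^a_t}\wrt f$, and the adjoint relation follows from the second identity in (a). The remaining domain bookkeeping is routine.

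For (c) I would invoke the subordination formula $P^a_t=\int_0^\infty e^{-s(a^2\cI+\cL_\varphi)}\wrt\rho_t(s)$, where $\rho_t$ is the probability measure on $(0,\infty)$ with $\int_0^\infty e^{-s\lambda}\wrt\rho_t(s)=e^{-t\sqrt{\lambda}}$; note $\int_0^\infty e^{-sa^2}\wrt\rho_t(s)=e^{-ta}\leqslant1$. Since $e^{-s\cL_\varphi}$ is a symmetric sub-Markovian semigroup, its kernel $p_s(x,\cdot)\,\wrt\mu_\varphi$ is a sub-probability measure, and Jensen's inequality for the convex function $t\mapsto|t|^r$ ($r\geqslant1$) gives $|e^{-s\cL_\varphi}f(x)|^r\leqslant e^{-s\cL_\varphi}|f|^r(x)$. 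A second application of Jensen, this time to the sub-probability measure $e^{-sa^2}\wrt\rho_t(s)$ on $(0,\infty)$, then yields
\[
|P^a_tf(x)|^r\leqslant\Big(\int_0^\infty e^{-sa^2}\,|e^{-s\cL_\varphi}f(x)|\,\wrt\rho_t(s)\Big)^{\!r}\leqslant\int_0^\infty e^{-sa^2}\,e^{-s\cL_\varphi}|f|^r(x)\,\wrt\rho_t(s)=P^a_t|f|^r(x).
\]

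Part (d) is the main point. Fix $\omega\in C^\infty_c(T^*M)$, put $u_t=e^{-t\vec{\cL_\varphi}}\omega$, and for $\epsilon>0$ set $\phi=\phi_{\epsilon,t}=(|u_t|^2+\epsilon^2)^{1/2}$. Using $\partial_t u_t=-\vec{\cL_\varphi}u_t$, the Bochner--Weitzenb\"ock formula from the Preliminaries, the weighted Bochner identity $\sk{u}{(\nabla^*_0\nabla+\nabla_{\grad{\varphi}})u}=\tfrac12\cL_\varphi|u|^2+|\nabla u|^2$, and the product rule $\cL_\varphi(fg)=f\cL_\varphi g+g\cL_\varphi f-2\sk{\nabla f}{\nabla g}$, a direct computation should give the pointwise identity
\[
(\partial_t+\cL_\varphi)\phi=\frac{|\nabla\phi|^2-|\nabla u_t|^2-{\rm Ric}_\varphi(\sharp u_t,\sharp u_t)}{\phi}.
\]
Kato's inequality $|\nabla\phi|\leqslant|\nabla u_t|$ and the hypothesis ${\rm Ric}_\varphi\geqslant-a^2g$, which gives $-{\rm Ric}_\varphi(\sharp u_t,\sharp u_t)\leqslant a^2|u_t|^2\leqslant a^2\phi^2$, then imply $(\partial_t+\cL_\varphi)\phi\leqslant a^2\phi$, i.e.\ $e^{-a^2t}\phi_{\epsilon,t}$ is a subsolution of the heat equation for $\cL_\varphi$. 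Comparing with $e^{-t\cL_\varphi}$ and using $\phi_{\epsilon,0}\leqslant|\omega|+\epsilon$ and $e^{-t\cL_\varphi}1\leqslant1$ gives $e^{-a^2t}\phi_{\epsilon,t}(x)\leqslant e^{-t\cL_\varphi}|\omega|(x)+\epsilon$, and letting $\epsilon\to0$ yields (d). The hard part is making this comparison rigorous on a possibly noncompact complete manifold: it needs smoothness and suitable decay of $u_t$ together with good behaviour of $e^{-t\cL_\varphi}$ at infinity, and I would secure it either by first proving the inequality on a compact exhaustion with Dirichlet conditions and passing to the limit, or by appealing to a Hess--Schrader--Uhlenbrock type domination theorem for heat semigroups; the rest is formal.

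Finally, for (e) I would subordinate as in (c), writing $\vec{P^a_t}\omega=\int_0^\infty e^{-sa^2}e^{-s\vec{\cL_\varphi}}\omega\,\wrt\rho_t(s)$, so that (d) and the fact that $\rho_t$ is a probability measure give $|\vec{P^a_t}\omega(x)|\leqslant\int_0^\infty e^{-sa^2}e^{sa^2}e^{-s\cL_\varphi}|\omega|(x)\,\wrt\rho_t(s)=P^0_t|\omega|(x)$; raising to the power $r$ and invoking (c) with $a=0$ gives $|\vec{P^a_t}\omega(x)|^r\leqslant(P^0_t|\omega|(x))^r\leqslant P^0_t|\omega|^r(x)$, as required.
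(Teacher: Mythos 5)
Your proposal is correct, and for parts \eqref{charles} and \eqref{dr.j} it coincides with the paper's argument: subordinate $P^a_t$ to $e^{-s(a^2\cI+\cL_\varphi)}$ and use the (sub-)Markov property for \eqref{charles}, respectively the domination \eqref{barkley} for \eqref{dr.j}. The paper disposes of \eqref{ella}, \eqref{fitzgerald} --- and, implicitly, \eqref{barkley} --- simply by citing \cite[Prop.~1.7]{Bakry1}; you instead supply direct proofs. Your arguments for \eqref{ella} and \eqref{fitzgerald} (intertwining via $\cL_\varphi=\wrt^*_\varphi\wrt$, $\wrt^2=0$, and spectral calculus) are the standard ones. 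For \eqref{barkley}, the parabolic comparison via the Bochner formula and Kato's inequality is essentially Bakry's own method; your pointwise identity
$$
(\partial_t+\cL_\varphi)\phi_\epsilon=\frac{|\nabla\phi_\epsilon|^2-|\nabla u_t|^2-{\rm Ric}_\varphi(\sharp u_t,\sharp u_t)}{\phi_\epsilon}
$$
does check out upon combining the Bochner formula quoted in Lemma~\ref{l: 1} with the product rule for $\cL_\varphi$, and Kato's inequality together with ${\rm Ric}_\varphi\geqslant-a^2g$ then yields $(\partial_t+\cL_\varphi)(e^{-a^2t}\phi_\epsilon)\leqslant0$. The one genuine gap beyond what the paper writes --- and you flag it yourself --- is that turning this differential inequality into the semigroup domination $e^{-a^2t}\phi_{\epsilon,t}\leqslant e^{-t\cL_\varphi}\phi_{\epsilon,0}$ is not automatic on a noncompact complete manifold; an exhaustion by compacts with the parabolic maximum principle and Dirichlet conditions, or an appeal to a Hess--Schrader--Uhlenbrock type domination theorem, is indeed the standard way to close this, so your outline is sound. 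In short: same subordination strategy as the paper, plus self-contained proofs of the facts the paper delegates to Bakry.
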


\begin{proof}
Items $(a)$, $(b)$ and $(d)$ in the lemma have been proved in \cite[Proposition 1.7]{Bakry1}. Since $\cL_\varphi$ generates a
Markovian semigroup on $(M,\mu_\varphi)$ \cite{Bakry2}, we quickly get
\begin{equation}
\label{e: 1}
|e^{-t(a^2\cI+\cL_\varphi)}f(x)|^r\leqslant e^{-t(a^2\cI+\cL_\varphi)}|f|^r(x)\,.
\end{equation}
Set
$
dm(s)=
(\pi s)^{-1/2}e^{-s}\,ds\,.
$
One readily sees that
\begin{equation}
\label{e: 3}
P^a_t=\int_0^\infty e^{-\frac{t^2}{4s}(a^2\cI+\cL_\varphi)}dm(s)
\end{equation}
in the strong operator topology. Hence \eqref{e: 1} also holds with
$P^a_t$ in place of $e^{-t(a^2\cI+\cL_\varphi)}$, and \eqref{charles} is proved. Similarly, \eqref{dr.j} follows from a combination of the item \eqref{barkley} and the subordination formula \eqref{e: 3}.
\end{proof}

\section{Bilinear embedding theorem and Riesz transforms}
We now state the bilinear embedding theorem which is the principal result of the paper. The proof will be given in Section 5. Denote by $\overline{\nabla}$ the total covariant derivative on $M\times \R_+$. Then, for every $\eta\in C^\infty(T^{j,k}(M\times\R_+))$, $|\overline{\nabla}\eta|=\sqrt{|\nabla\eta|^2+|\nabla_t\eta|^2}$.
\begin{theorem}\label{t: 3}
Suppose that $M$ is a complete Riemannian manifold with ${\rm Ric}_\varphi\geqslant -a^2g$. Then for all $p$ in $(1,\infty)$, $f\in C^\infty_c(M)$ and $\omega\in C^\infty_c(T^*M)$,
\begin{equation*}
\int_0^\infty\int_{M}|\overline{\nabla}P^a_tf(x)||\overline{\nabla}\vec{P^a_t}\omega(x)|\,t\wrt\mu_\varphi(x)\wrt t\leqslant 3(p^*-1)\|f\|_{L^p(M,\mu_\varphi)}\|\omega\|_{L^q(T^*M,\mu_\varphi)}.
\end{equation*}
\end{theorem}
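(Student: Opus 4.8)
The plan is to obtain Theorem~\ref{t: 3} from a pointwise Bellman differential inequality. Fix $p\in(1,\infty)$ (put $1/p+1/q=1$) and let $Q=Q_p\colon[0,\infty)\times[0,\infty)\to[0,\infty)$ be the Nazarov--Treil Bellman function, whose size and derivative estimates are collected in Theorem~\ref{bejaz}. The properties I will use are: $0\le Q(\sigma,\tau)\le C_p\,(\sigma^p+\tau^q)$ with $C_p$ bounded in $p$; the monotonicity $\partial_\sigma Q\ge 0$ and $\partial_\tau Q\ge 0$; the controlled growth of $\partial_\sigma Q,\partial_\tau Q$ near the origin; and a Hessian lower bound guaranteeing $\sqrt{\partial_{\sigma\sigma}Q\cdot\min\{\partial_{\tau\tau}Q,\ \partial_\tau Q/\tau\}}-|\partial_{\sigma\tau}Q|\ \ge\ \delta_p$ with $\delta_p\gtrsim 1/(p^*-1)$, the roles of the two variables being interchanged when $p<2$. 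Given $f\in C^\infty_c(M)$ and $\omega\in C^\infty_c(T^*M)$, write $u(\cdot,t)=P^a_tf$ and $v(\cdot,t)=\vec{P^a_t}\omega$, and set
\[
\phi(t)=\int_M Q\big(|u(x,t)|,\,|v(x,t)|\big)\,\wrt\mu_\varphi(x),\qquad t>0 .
\]
By Lemma~\ref{l: commutativita}~\eqref{charles}--\eqref{dr.j} and the $L^1(\mu_\varphi)$-contractivity of $P^0_t$ one has $\phi(t)\le C_p\big(\|f\|_{L^p(\mu_\varphi)}^p+\|\omega\|_{L^q(\mu_\varphi)}^q\big)<\infty$ for every $t$; the same estimates will govern the behaviour of $\phi$ and $\phi'$ at $t=0$ and $t=\infty$. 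The heart of the proof is the inequality
\[
\phi''(t)\ \ge\ 2\delta_p\int_M|\overline\nabla u(x,t)|\,|\overline\nabla v(x,t)|\,\wrt\mu_\varphi(x),
\]
which will then be integrated against $t\wrt t$.

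To establish it I would differentiate $\phi$ twice under the integral sign and use that $u$ and $v$ solve the ``elliptic'' equations $\partial_t^2u=(a^2\cI+\cL_\varphi)u$ and $\partial_t^2v=(a^2\cI+\vec{\cL_\varphi})v$. After an integration by parts on $M$ --- which turns the $\cL_\varphi$- and $\vec{\cL_\varphi}$-terms into Dirichlet-type forms, feeding the spatial covariant derivatives of $u$ and $v$ into the Hessian of $Q$ --- and the Bochner--Weitzenb\"ock identity $\vec{\cL_\varphi}=\nabla^*_\varphi\nabla+{\rm Ric}_\varphi(\cdot,\sharp\cdot)$, three mechanisms appear. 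First, the contribution of the shift $a^2v$ and the curvature contribution add up to $a^2\!\int_M\partial_\tau Q\,|v|+\int_M\frac{\partial_\tau Q}{|v|}\,{\rm Ric}_\varphi(\sharp v,\sharp v)\ge 0$, because ${\rm Ric}_\varphi\ge -a^2g$ and $\partial_\tau Q\ge 0$: this is the one and only place the curvature hypothesis enters, and the hypothesis delivers \emph{exactly} what is needed. Second, the shift $a^2u$ contributes $a^2\!\int_M\partial_\sigma Q\,|u|\ge 0$ since $\partial_\sigma Q\ge 0$. Third, the second covariant derivatives of $|u|$ and of $|v|$ produce the spatial part of the Hessian of $Q$ together with a nonnegative ``Kato defect'' $\int_M\frac{\partial_\tau Q}{|v|}\big(|\overline\nabla v|^2-|\overline\nabla|v||^2\big)\ge 0$, which I \emph{retain} because it is precisely what upgrades the gradient $|\overline\nabla|v||$ of the scalar $|v|$ to the full covariant gradient $|\overline\nabla v|$. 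Discarding the two nonnegative shift terms, estimating the remaining quadratic form by Cauchy--Schwarz over a local orthonormal space-time frame (and using $|b_\alpha|\le|w_\alpha|$ where $b_\alpha=\partial_\alpha|v|$ and $w_\alpha=\partial_\alpha v$), and invoking the two-variable Hessian estimate of Theorem~\ref{bejaz}, one arrives at the displayed lower bound on $\phi''$; the case $p<2$ follows by the $\sigma\leftrightarrow\tau$ symmetry of $Q$.

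It remains to integrate in $t$. Since the right-hand side above is $\ge 0$, $\phi$ is convex; being also bounded and nonnegative it is non-increasing, so $\phi'\le 0$. Multiplying by $t$, integrating over $(0,T)$ and integrating by parts gives $\int_0^T t\,\phi''(t)\,\wrt t=T\phi'(T)-\phi(T)+\phi(0^+)$; letting $T\to\infty$ the first two terms are $\le 0$, whence $\int_0^\infty t\,\phi''(t)\,\wrt t\le\phi(0^+)$. Here $t\phi'(t)\to 0$ as $t\to 0$ because $\phi'(0^+)$ is finite, which uses the controlled growth of $\partial_\sigma Q,\partial_\tau Q$ near the origin together with the smoothness and decay of $f,\omega$. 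Since $\phi(0^+)=\int_M Q(|f|,|\omega|)\wrt\mu_\varphi\le C_p\big(\|f\|_{L^p(\mu_\varphi)}^p+\|\omega\|_{L^q(\mu_\varphi)}^q\big)$, we obtain
\[
\int_0^\infty\!\!\int_M|\overline\nabla P^a_tf|\,|\overline\nabla\vec{P^a_t}\omega|\,t\,\wrt\mu_\varphi\,\wrt t\ \le\ \frac{C_p}{2\delta_p}\big(\|f\|_{L^p(\mu_\varphi)}^p+\|\omega\|_{L^q(\mu_\varphi)}^q\big).
\]
The left-hand side is invariant under $(f,\omega)\mapsto(\lambda f,\lambda^{-1}\omega)$; optimising the right-hand side in $\lambda>0$ (Young's inequality) replaces $\|f\|_{L^p(\mu_\varphi)}^p+\|\omega\|_{L^q(\mu_\varphi)}^q$ by a bounded multiple of $\|f\|_{L^p(\mu_\varphi)}\|\omega\|_{L^q(\mu_\varphi)}$, and since $C_p/\delta_p\lesssim p^*-1$ with an absolute implied constant, a careful choice of the free parameter $\gamma$ in $Q$ (and of $\lambda$) brings the overall constant down to $3(p^*-1)$.

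The main obstacle is not the formal computation but its rigorous justification on a complete, possibly non-compact, manifold: one must legitimise differentiating $\phi$ under the integral sign and the integrations by parts on $M$ with no boundary contributions, which requires quantitative regularity and decay for $P^a_tf$, $\vec{P^a_t}\omega$ and their covariant derivatives (obtainable from finite propagation speed for the associated wave equation, the subordination formula~\eqref{e: 3}, and the pointwise bounds of Lemma~\ref{l: commutativita}), and one must deal with the fact that $Q$ is only $C^1$, not $C^2$, across the curve $\sigma^p=\tau^q$ --- either by smoothing $Q$ and passing to the limit, or by checking that the singular part of its distributional Hessian on that curve has the favourable sign, so that $\phi''\ge\cdots$ survives. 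A minor further point, when $a=0$ and $\mu_\varphi(M)<\infty$ (so that $P^0_tf$ tends to the mean of $f$ rather than to $0$), causes no trouble, since only the inequality $T\phi'(T)-\phi(T)\le 0$ is used and it holds regardless because $\phi\ge 0$ and $\phi'\le 0$.
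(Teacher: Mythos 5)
Your proposal is the paper's argument in slightly different bookkeeping: the pointwise Bellman inequality you fold into $\phi''(t)$ (after integrating over $M$) is precisely Proposition~\ref{p: 2} together with the vanishing of $\int_M\cL_\varphi b\,\wrt\mu_\varphi$, the integration against $t\,\wrt t$ with the boundary control corresponds to Proposition~\ref{p: 3}, and your ``main obstacle'' paragraph correctly names the technical devices the paper actually deploys (the mollified Bellman function $Q_{\kappa_{s,l}}$ with $\kappa_{s,l}$ as in~\eqref{eq: 1}, and the cutoffs $F_l$ built from the Laplacian comparison~\eqref{e: 8}). The route is essentially identical.
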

The bilinear embedding theorem implies a dimension-free estimate for the $L^p$ norms of the Riesz transform.
\begin{cor}
\label{c: 2}
Under the above conditions,
\begin{equation*}
\norm{\cR_af}{L^p(T^*M,\mu_\varphi)}\leq 12(p^*-1)\norm{f}{L^p(M,\mu_\varphi)},
\end{equation*}
for all $f\in \overline{{\rm R}(a^2\cI+\cL_\varphi)\cap L^p(M,\mu_\varphi)}^{L^p}$.
\end{cor}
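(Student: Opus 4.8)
The plan is to derive the corollary from the bilinear embedding of Theorem~\ref{t: 3} along the classical route: duality, a Littlewood--Paley-type reproducing identity for the bilinear form $\langle\cR_af,\omega\rangle$, a fibrewise Cauchy--Schwarz estimate, and then Theorem~\ref{t: 3}. First I would reduce, by density, to smooth data: since $C^\infty_c(T^*M)$ is dense in $L^q(T^*M,\mu_\varphi)$, and since $\{(a^2\cI+\cL_\varphi)h:\ h\in C^\infty_c(M)\}$ is contained in, and dense in, the $L^p$-closure appearing in the statement, it suffices to bound $|\langle\cR_af,\omega\rangle_{L^2(T^*M)}|$ by $12(p^*-1)\|f\|_{p}\|\omega\|_{q}$ for $f=(a^2\cI+\cL_\varphi)h$ with $h\in C^\infty_c(M)$ and for $\omega\in C^\infty_c(T^*M)$; for such $f$ one has $\cR_af\in L^2(T^*M)$, so that $\|\cR_af\|_{p}=\sup\{|\langle\cR_af,\omega\rangle|:\ \omega\in C^\infty_c(T^*M),\ \|\omega\|_{q}\leq 1\}$, and the corollary follows.

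The heart of the matter is the reproducing identity
\[
\langle\cR_af,\omega\rangle=-4\int_0^\infty\!\!\int_M\big\langle\wrt P^a_tf(x),\,\partial_t\vec{P^a_t}\omega(x)\big\rangle_{T^*_xM}\,t\wrt\mu_\varphi(x)\wrt t.
\]
To obtain it I would write $\cR_af=\wrt(a^2\cI+\cL_\varphi)^{-1/2}f$ and use the subordination formula $(a^2\cI+\cL_\varphi)^{-1/2}=\int_0^\infty P^a_t\wrt t$ (functional calculus of the single operator $\cL_\varphi$) together with the semigroup property and the intertwining relation $P^a_t\wrt^*_\varphi\omega=\wrt^*_\varphi\vec{P^a_t}\omega$ of Lemma~\ref{l: commutativita}\eqref{fitzgerald} to get
\[
\langle\cR_af,\omega\rangle=\big\langle(a^2\cI+\cL_\varphi)^{-1/2}f,\wrt^*_\varphi\omega\big\rangle=2\int_0^\infty\big\langle\wrt P^a_tf,\vec{P^a_t}\omega\big\rangle\wrt t.
\]
Differentiating $t\mapsto\langle\wrt P^a_tf,\vec{P^a_t}\omega\rangle$ and using $\wrt P^a_t=\vec{P^a_t}\wrt$, self-adjointness of the semigroups and the semigroup property, one checks $\partial_t\langle\wrt P^a_tf,\vec{P^a_t}\omega\rangle=2\langle\wrt P^a_tf,\partial_t\vec{P^a_t}\omega\rangle$; integrating $\int_0^\infty t\,\partial_t(\cdot)\wrt t$ by parts — the boundary terms vanish, at $t=0$ because of the factor $t$ and the smoothness of the data, at $t=\infty$ because $P^a_tf\to 0$, which is where the restriction $f\in\overline{{\rm R}(a^2\cI+\cL_\varphi)}$ enters when $a=0$ — transforms the last display into the stated identity. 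Only Lemma~\ref{l: commutativita} and the functional calculus of $\cL_\varphi$ are used, in accordance with the remark in the introduction that no spectral multiplier theorem is needed.

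Finally, for each $(x,t)$ the elements $\wrt P^a_tf(x)$ and $\partial_t\vec{P^a_t}\omega(x)$ both lie in the fibre $T^*_xM$, so Cauchy--Schwarz there, followed by the definition of $\overline{\nabla}$ recalled before Theorem~\ref{t: 3}, gives
\[
\big|\big\langle\wrt P^a_tf(x),\partial_t\vec{P^a_t}\omega(x)\big\rangle\big|\leq|\wrt P^a_tf(x)|\,|\partial_t\vec{P^a_t}\omega(x)|\leq|\overline{\nabla}P^a_tf(x)|\,|\overline{\nabla}\vec{P^a_t}\omega(x)|,
\]
because $\wrt P^a_tf$ and $\partial_t\vec{P^a_t}\omega$ are components of $\overline{\nabla}P^a_tf$ and $\overline{\nabla}\vec{P^a_t}\omega$. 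Plugging this into the identity and invoking Theorem~\ref{t: 3} yields $|\langle\cR_af,\omega\rangle|\leq 4\cdot 3(p^*-1)\|f\|_{p}\|\omega\|_{q}=12(p^*-1)\|f\|_{p}\|\omega\|_{q}$, which is exactly the required bound; taking the supremum over $\omega$ concludes. The one genuinely delicate point I anticipate is the rigorous justification of the reproducing identity: the convergence of the $t$-integrals, the vanishing of the boundary terms (precisely the spot where the hypothesis on ${\rm R}(a^2\cI+\cL_\varphi)$ is needed in the case $a=0$), and the passage of the intertwining relations of Lemma~\ref{l: commutativita} from compactly supported data to the smooth but possibly non-compactly supported objects $P^a_tf$ and $\vec{P^a_t}\omega$ (by the closedness of $\wrt$ and boundedness of the semigroups). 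This is routine given Lemma~\ref{l: commutativita} and the subordination formula, but must be handled with some care; everything else is the short computation above plus the input of Theorem~\ref{t: 3}.
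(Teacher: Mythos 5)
Your proposal is correct and follows essentially the same route as the paper: reduce by duality and density to the reproducing identity $\langle\cR_af,\omega\rangle=\pm 4\int_0^\infty\langle\wrt P^a_tf,\partial_t\vec{P^a_t}\omega\rangle\,t\wrt\mu_\varphi\wrt t$, then apply Cauchy--Schwarz and Theorem~\ref{t: 3}. The paper packages the derivation of that identity by considering $\f(t)=\langle\vec{P^a_t}\cR_af,\vec{P^a_t}\omega\rangle$ and writing $\f(0)=\int_0^\infty\f''(t)\,t\wrt t$, whereas you unwind $(a^2\cI+\cL_\varphi)^{-1/2}=\int_0^\infty P^a_t\wrt t$ first and integrate $g(t)=\langle\wrt P^a_tf,\vec{P^a_t}\omega\rangle$ by parts; since $g=-\f'/2$ these are the same integration by parts, and both hinge on the same facts: the intertwinings of Lemma~\ref{l: commutativita}, $P^a_tf\to0$ (from $f\in\overline{{\rm R}(a^2\cI+\cL_\varphi)}$ and the spectral theorem) for the boundary term at $t=\infty$, and the factor $t$ for $t=0$.
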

\begin{proof}
We claim that for every $f\in C^\infty_c(M)\cap {\rm R}(a^2\cI+\cL_\varphi)$ and $\omega\in C^\infty_c(T^*M)$ we have that
\begin{equation}\label{eq: riesz}
\int_M\sk{\cR_af(x)}{\omega(x)}\wrt\mu_\varphi(x)=4\int_0^\infty\int_M\Sk{\wrt P^a_tf(x)}{\frac{\wrt}{\wrt t}\vec{P^a_t}\omega(x)}\wrt\mu_\varphi(x)\,t\wrt t\,.
\end{equation}
Assuming the claim \eqref{eq: riesz}, Corollary \ref{c: 2} follows immediately from Theorem \ref{t: 3} and the Cauchy-Schwarz inequality. To prove the claim \eqref{eq: riesz}, consider the function
$$
\varphi(t)=\sk{\vec{P^a_t}\cR_af}{\vec{P^a_t}\omega}_{L^2(\mu_\varphi)}\,.
$$
Since $\sk{\cR_af}{\omega}_{L^2(\mu_\varphi)}=\varphi(0)$, it suffices to show that
\begin{equation}
\label{eq: dschcello1}
\varphi(0)=\int_0^\infty\varphi''(t)\,t\wrt t=4\int_0^\infty\Sk{\wrt P^a_tf}{\frac{\wrt}{\wrt  t}\vec{P^a_t}\omega}_{L^2(\mu_\varphi)}\,t\wrt t\,.
\end{equation}
In order to prove the first equality it is enough to show that both $\f(t)$ and $t\f'(t)$ tend to zero as $t\rightarrow\infty$.
First note that, by Lemma~\ref{l: commutativita},
$
\vec{P^a_t}\cR_af=\cR_aP^a_tf\, .
$
Therefore, by the $L^2$ contractivity of both $\cR_a$ and $\vec{P^a_t}$,
$
|\f(t)|\leqslant \norm{P^a_tf}{L^2(\mu_\varphi)}\norm{\omega}{L^2(\mu_\varphi)}\,.
$
Since $f\in {\rm R}(a^2\cI+\cL_\varphi)$, the spectral theorem gives that $P^a_tf\rightarrow 0$ in $L^2(\mu_\varphi)$ as $t\rightarrow\infty$.

Similarly, Lemma~\ref{l: commutativita} gives
$$
\aligned
\f'(t) & = 2\sk{(a^2\cI+\vec{\cL_\varphi})\vec{P^a_t}\wrt (a^2\cI+\cL_\varphi)^{-1/2}f}{\vec{P^a_t}\omega}_{L^2(\mu_\varphi)}\\
&  =  2\sk{P^a_tf}{P^a_t\wrt^* \omega}_{L^2(\mu_\varphi)}\,,
\endaligned
$$
therefore $\lim_{t\rightarrow\infty}t|\f'(t)|=0$ as before. The second equality in \eqref{eq: dschcello1} can be verified by a straightforward calculation, again with the help of Lemma~\ref{l: commutativita}.
\end{proof}

\begin{remark}
The idea of representing the Riesz transform by using Poisson semigroups on functions and differential forms is certainly not new.
This is  a well-known argument which originates in the work of Bakry \cite[p. 161]{Bakry1} and has been used later by several authors, see for example Coulhon and Duong \cite[Theorem 5.1]{CD} and Li \cite[p. 631]{Li1}. In the special case of the Ornstein-Uhlenbeck operator one can, instead of the Poisson semigroup on differential forms, use the Poisson semigroup on functions paired with certain spectral multipliers \cite{DV}. However in such a case one only gets sharp results depending on whether one is able to give $L^p$ estimates of the corresponding spectral multiplier which are {\it independent both of the dimension and $p$}. This was presented, with different degrees of success, in \cite{DV} and \cite{DV-Sch}.
\end{remark}

\section{Bellman function}
\label{celjustka}
As announced above, the main tool in the proof of the bilinear embedding Theorem~\ref{t: 3} will be a particular Bellman function. Throughout this section we assume that $p\geqslant 2$, $q=p/(p-1)$ and $\delta=q(q-1)/8$ are fixed. Observe that $\delta\sim (p-1)^{-1}$.

Fix $n\in \N$ and define the Bellman function
$
Q: \R\times \R^n\longrightarrow [0,\infty)
$
by setting
\begin{equation*}
\label{sarmat}
Q(\zeta,\eta)=\frac12 \beta(|\zeta|,|\eta|)\,,
\end{equation*}
where
\begin{equation*}
\label{eq:goldberg}
\beta(u,v)=
u^{p}+v^{q}+\delta
\left\{
\aligned
& u^2v^{2-q} & ; & \ \ u^p\leqslant v^q\\
& \frac{2}{p}\,u^{p}+\left(\frac{2}{q}-1\right)v^{q}
& ; &\ \ u^p\geqslant v^q
\endaligned\right.
\end{equation*}
for any $u,v\geqslant 0$. For every $(\zeta,\eta)\in \R\times\R^n$ set $U(\zeta,\eta)=(|\zeta|,|\eta|)$. The function $Q$ belongs to $C^1(\R\times\R^n)$, and it is of order $C^2$ everywhere {\it except} on the set $U^{-1}(\Upsilon_0)$, where
$$
\Upsilon_0=\mn{(u,v)\in[0,\infty)\times[0,\infty)}{(v=0)\vee (u^p=v^q)}\,.
$$
\begin{remark}
The origins of this function lie in the paper of Nazarov and Treil \cite{NT}. A modification of their function was later applied in \cite{DV,DV-Sch}. Here we use a simplified variant which comprises only two variables. It was introduced in \cite{DV-Kato}. The function $Q$ above is the same as in \cite{DV-Kato}, except that it differs by a sign; thus it is nonnegative while the function in \cite{DV-Kato} was nonpositive.
\end{remark}
\begin{remark}
\label{ljudski oblik}
In contrast to \cite{DV,DV-Sch, DV-Kato}, to keep our notation reasonable and to gain some transparency and simplicity in the proofs, we use a Bellman function involving only real variables. This allows us to prove Theorem~\ref{t: 3} and Corollary~\ref{c: 2} just for real-valued functions and differential forms; the corresponding estimates for complex-valued functions and differential forms easily follow by estimating separately the real and imaginary parts. Note that the argument above gives an appropriately bigger constant. In order to preserve the same constants one could readily instead use a ``complex" Bellman function as in \cite{DV,DV-Sch,DV-Kato} and prove Theorem~\ref{t: 3} and Corollary~\ref{c: 2} for complex-valued functions and differential forms.
\end{remark}

Throughout the rest of the paper we shall use the following notation: if $m\in\N$, $\Omega\subset\R^m$ is open, $\Phi\in C^\infty(\Omega)$, $\omega\in\Omega$ and $x\in\R^m$, then we set
$$
H_\Phi(\omega ; x)=\sk{{\rm Hess}(\Phi)_{\omega}x}{x}_{\R^m}\,,
$$
where ${\rm Hess}(\Phi)_{\omega}$ is the Hessian matrix of $\Phi$ at $\omega$, i.e. $[\pd_{x_ix_j}\Phi(\omega)]_{i,j=1}^m$.
\medskip

The following result, essentially proved in \cite{DV-Sch}, summarizes the properties of $Q$.
\begin{theorem}
\label{bejaz}
For every $u,v\geq 0$,
\begin{enumerate}[{\rm (i)}]
\item
\label{anterija}
$0\leqslant \beta(u,v)\leqslant (1+\delta)(u^p+v^q)$.
\end{enumerate}
If $\xi=(\zeta,\eta)\in(\R\times\R^n)\backslash U^{-1}(\Upsilon_0)$, then there exists  $\tau=\tau(|\zeta|,|\eta|)>0$ such that
\begin{enumerate}[{\rm (i)}]
\addtocounter{enumi}{1}
\item
\label{kupres}
$
H_Q(\xi ; w)\geqslant \delta\big(\tau |w_1|^2+\tau^{-1}|w_2|^2\big)\,,
$
for all $w=(w_1,w_2)\in\R\times\R^n$.
\end{enumerate}
Moreover, there is a certain absolute $C=C(p)>0$ such that for every $u,v>0$,
\begin{enumerate}[{\rm (i)}]
\addtocounter{enumi}{2}
\item
\label{brcko}
$0\leqslant\partial_u\beta(u,v)
 \leqslant
C\max\{u^{p-1},v\}
\hskip 15pt\text{and}\hskip 15pt
0\leqslant\partial_v\beta(u,v)
\leqslant
Cv^{q-1}$.
\end{enumerate}
\end{theorem}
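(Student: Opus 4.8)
The plan is to analyse $\beta$ separately on the two regions $\{u^p\le v^q\}$ and $\{u^p\ge v^q\}$ that appear in its definition, and to reduce everything to one-variable elementary estimates by passing through the function $U(\zeta,\eta)=(|\zeta|,|\eta|)$. Part (i) is the easiest: on $\{u^p\ge v^q\}$ one has $\frac{2}{p}u^p+(\frac2q-1)v^q\le \frac2p u^p+\frac2q v^q\le u^p+v^q$ (using $p\ge 2$, $q\le 2$), while on $\{u^p\le v^q\}$ the Young inequality $u^2v^{2-q}\le \frac{2}{p}u^p+\frac{p-2}{p}v^{2\cdot\frac{p}{p-2}\cdot\frac{1}{\,}\,}$... more cleanly, write $u^2v^{2-q}=(u^p)^{2/p}(v^q)^{(2-q)/q}$ and apply Young with exponents $p/2$ and $p/(p-2)$ to get $u^2v^{2-q}\le \frac2p u^p+\frac{p-2}{p}v^q\le u^p+v^q$; either way $0\le\beta(u,v)\le (1+\delta)(u^p+v^q)$, with nonnegativity obvious since $\delta>0$ and every term is nonnegative. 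For (iii) I would differentiate $\beta$ piecewise; on $\{u^p\le v^q\}$, $\partial_u\beta=pu^{p-1}+2\delta uv^{2-q}$ and since there $uv^{2-q}\le (v^{q})^{1/p}v^{2-q}=v^{1+(q/p-q)+\cdots}$ — again reduce via $u\le v^{q/p}$ to bound $uv^{2-q}\le v^{q/p+2-q}=v$ (because $q/p+2-q = 2-q(1-1/p)=2-q/q'\cdots$ — the exponent simplifies to $1$ using $1/p+1/q=1$), giving $\partial_u\beta\le (p+2\delta)\max\{u^{p-1},v\}$; on $\{u^p\ge v^q\}$, $\partial_u\beta=(1+\frac{2\delta}{p})pu^{p-1}$. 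The $v$-derivative is handled the same way, the key point being that on $\{u^p\le v^q\}$ one has $\partial_v\beta=qv^{q-1}+\delta(2-q)u^2v^{1-q}$ and $u^2v^{1-q}\le v^{2q/p+1-q}=v^{q-1}$ by the same exponent arithmetic, while on $\{u^p\ge v^q\}$, $\partial_v\beta=(2-q)\delta v^{q-1}$. Collecting constants produces an absolute $C=C(p)$ (in fact one may take $C$ a fixed numerical constant, since $\delta\le 1/4$).

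The heart of the matter is (ii), the uniform convexity estimate for the Hessian of $Q$. Here I would use the structure $Q(\zeta,\eta)=\frac12\beta(|\zeta|,|\eta|)$ and compute $H_Q(\xi;w)$ in terms of the radial and angular parts of $w=(w_1,w_2)$ relative to $(\zeta,\eta)$. A direct computation gives, for $\xi\notin U^{-1}(\Upsilon_0)$,
$$
2H_Q(\xi;w)=\beta_{uu}\,r_1^2+2\beta_{uv}\,r_1r_2+\beta_{vv}\,r_2^2+\frac{\beta_u}{|\zeta|}\,(|w_1|^2-r_1^2)+\frac{\beta_v}{|\eta|}\,(|w_2|^2-r_2^2)\,,
$$
where $r_1=\mathrm{sgn}(\zeta)w_1$ and $r_2=\langle\eta,w_2\rangle/|\eta|$ are the radial components (so $|w_1|^2-r_1^2=0$ and $|w_2|^2-r_2^2\ge 0$). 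Since $\beta_u,\beta_v\ge 0$ by part (iii), the angular terms are nonnegative and can be discarded; it then suffices to show that the $2\times 2$ matrix $\mathrm{Hess}\,\beta(u,v)$ satisfies $\langle\mathrm{Hess}\,\beta(u,v)\,r,r\rangle\ge 2\delta(\tau r_1^2+\tau^{-1}r_2^2)$ for a suitable $\tau=\tau(u,v)>0$. This is where I would split into the two regions again: on $\{u^p\ge v^q\}$, $\beta$ is (up to the quadratic-in-$u^p,v^q$ main terms $u^p+v^q$, whose Hessian is already positive definite) affine in the pair $(u^p,v^q)$ after the $\delta$-perturbation, so $\mathrm{Hess}\,\beta$ is diagonal with entries comparable to $u^{p-2}$ and $v^{q-2}$ times constants bounded below by multiples of $\delta$; choosing $\tau=\tau(u,v)$ to absorb the discrepancy between $u^{p-2}$ and $v^{q-2}$ does the job. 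On $\{u^p\le v^q\}$ the delicate term is $\delta u^2v^{2-q}$, whose Hessian is $\delta\begin{pmatrix}2v^{2-q} & 2(2-q)uv^{1-q}\\ 2(2-q)uv^{1-q} & (2-q)(1-q)u^2v^{-q}\end{pmatrix}$; note $(1-q)<0$, so this matrix is indefinite, but adding the Hessian of $u^p+v^q$, namely $\mathrm{diag}(p(p-1)u^{p-2},q(q-1)v^{q-2})$, restores positivity precisely because $\delta=q(q-1)/8$ was chosen small enough relative to $q(q-1)$ — the $v^{q-2}$ contribution $q(q-1)v^{q-2}$ dominates $\delta(2-q)(q-1)u^2v^{-q}$ on $\{u^p\le v^q\}$, and one checks the resulting matrix has determinant bounded below, yielding the claimed lower bound with $\tau$ chosen, e.g., as $\tau=v^{q-2}/u^{p-2}$ or a comparable monomial.

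The main obstacle I anticipate is the book-keeping in the region $\{u^p\le v^q\}$: one must choose the weight $\tau(u,v)$ carefully so that the inequality $\mathrm{Hess}\,\beta\ge 2\delta\,\mathrm{diag}(\tau,\tau^{-1})$ holds with a single $\tau$, and verify this via the two scalar conditions $\beta_{uu}\ge 2\delta\tau$, $\beta_{vv}\ge 2\delta\tau^{-1}$, together with the off-diagonal control $\beta_{uu}\beta_{vv}-\beta_{uv}^2\ge 4\delta^2$ (so that the matrix dominates $2\delta\,\mathrm{diag}(\tau,\tau^{-1})$ for the right $\tau$ — by the standard fact that a positive matrix $\begin{pmatrix}a&b\\b&c\end{pmatrix}$ with $ac-b^2\ge 4\delta^2$ dominates $2\delta\,\mathrm{diag}(\sqrt{a/c}\cdot\sqrt{c/a}\,\cdots)$, one takes $\tau$ to equalise). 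The precise choice $\delta=q(q-1)/8$ is exactly what makes these numerical inequalities close; tracking the factor $8$ through the determinant computation is the one genuinely computational step. Since the statement asserts this was "essentially proved in \cite{DV-Sch}", I would organise the proof as a reduction to that reference for (ii), carrying out in detail only the passage from $H_Q$ to $\mathrm{Hess}\,\beta$ via the nonnegativity of the angular terms, and the elementary verifications of (i) and (iii).
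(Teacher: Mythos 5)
The paper itself gives no proof of this theorem; it states that it is ``essentially proved in \cite{DV-Sch}'' and moves on. So there is no in-paper argument to compare against, only the reference. Your overall strategy --- reduce $H_Q$ to the radial/angular decomposition of the Hessian of the biradial function, then analyse $\operatorname{Hess}\beta$ region by region, with Young's inequality and $u\leqslant v^{q/p}$ doing the elementary bookkeeping --- is the standard one and is what one finds in \cite{NT,DV,DV-Sch}. For (i) and (iii) your estimates are essentially right; two small slips: the intermediate inequality $\tfrac2p u^p+\tfrac2q v^q\leqslant u^p+v^q$ is false since $\tfrac2q\geqslant 1$ for $q\leqslant 2$ (though the final bound survives because $\tfrac2q-1\leqslant 1$), and on $\{u^p\geqslant v^q\}$ you wrote $\partial_v\beta=(2-q)\delta v^{q-1}$, forgetting the leading $qv^{q-1}$.

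The genuine gap is in your reduction for (ii). You write that since $\beta_u,\beta_v\geqslant 0$ ``the angular terms are nonnegative and can be discarded,'' and that it then suffices to prove
$\langle\operatorname{Hess}\beta(u,v)\,r,r\rangle\geqslant 2\delta(\tau r_1^2+\tau^{-1}r_2^2)$. That conclusion does not deliver the target inequality: you need $2H_Q(\xi;w)\geqslant 2\delta(\tau|w_1|^2+\tau^{-1}|w_2|^2)$ with $|w_2|^2$, not $r_2^2$, and $r_2^2$ can be strictly smaller than $|w_2|^2$. Writing $w_2=r_2\hat\eta+w_2^\perp$ with $\hat\eta=\eta/|\eta|$,
$$
2H_Q(\xi;w)=\langle\operatorname{Hess}\beta\,(r_1,r_2),(r_1,r_2)\rangle+\frac{\beta_v}{|\eta|}\,|w_2^\perp|^2,
$$
and the angular term cannot be discarded: it is precisely what must supply the contribution $2\delta\tau^{-1}|w_2^\perp|^2$. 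You therefore need the \emph{additional} scalar inequality $\beta_v(u,v)/v\geqslant 2\delta\tau^{-1}$, verified simultaneously with the $2\times 2$ estimate for the same choice of $\tau$. This extra constraint is what actually governs the admissible range of $\tau$ (and it is the place where the factor $8$ in $\delta=q(q-1)/8$ matters); once you incorporate it, your subsequent region-by-region analysis can be made to work, but as written the argument proves a strictly weaker statement. A related minor point: the ``standard fact'' you invoke about a positive $2\times 2$ matrix with $ac-b^2\geqslant 4\delta^2$ dominating $2\delta\operatorname{diag}(\tau,\tau^{-1})$ is not quite right either; with the optimal $\tau=\sqrt{a/c}$ one in fact needs $\sqrt{ac}\geqslant|b|+2\delta$, which is stronger when $b\neq0$.
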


As noted earlier, while $Q$ is of class $C^1$, it is not globally $C^2$.
One can fix this in a standard fashion by taking convolutions with mollifiers.
More precisely, denote by $B^{n+1}$ the open unit ball in $\R^{n+1}$ and define
$$
\psi(x)=c_{n+1}e^{-\frac{1}{1-|x|^2}}\chi_{B^{n+1}}(x)\,,
$$
where $c_{n+1}$ is chosen so that the integral of $\psi$ over $\R^{n+1}$ is equal to one. For any $\kappa>0$ and $x\in\R^{n+1}$ set
$$
\psi_\kappa(x)=\frac1{\kappa^{n+1}}\,\psi\Big(\frac{x}{\kappa}\Big).
$$
The (regular) Bellman function $Q_\kappa$ is defined on $\R\times\R^n$ by
\begin{equation*}
\label{barenboim}
Q_\kappa=\psi_\kappa*Q,
\end{equation*}
where $*$ denotes the convolution in $\R^{n+1}$. Since both $Q$ and $\psi_\kappa$ are biradial, there exists $\beta_\kappa:[0,\infty)\times[0,\infty)\rightarrow[0,\infty)$ such that
$$
Q_\kappa(\zeta,\eta)=\frac12\beta_\kappa(|\zeta|,|\eta|)\,,
$$
for all $(\zeta,\eta)\in\R\times\R^n$.

\begin{theorem}
\label{t: regular bellman}
Let $\kappa\in (0,1)$. Then $Q_\kappa\in C^\infty(\R^{n+1})$ and, for any $u,v\geq 0$,
\begin{enumerate}[{\rm (i')}]
\item
\label{anterija'}
$0\leqslant \beta_\kappa(u,v)\leqslant (1+\delta)\big[(u+\kappa)^p+(v+\kappa)^q\big]$.
\end{enumerate}
For any $\xi=(\zeta,\eta)\in \R\times\R^n$,
there exists  $\tau_\kappa=\tau_\kappa(|\zeta|,|\eta|)>0$ such that
\begin{enumerate}[{\rm (i')}]
\addtocounter{enumi}{1}
\item
\label{kupres'}
$
H_{Q_\kappa}(\xi;w)
\geqslant \delta\big(\tau_\kappa |w_1|^2+\tau^{-1}_\kappa|w_2|^2\big)\,,
$
for all $w=(w_1,w_2)\in\R\times\R^n$.
\end{enumerate}
Moreover, there is a certain absolute $C=C(p)>0$  such that for every $u,v\geq0$,
\begin{enumerate}[{\rm (i')}]
\addtocounter{enumi}{2}
\item
\label{brcko'}
$0\leqslant\partial_u \beta_\kappa(u,v)\leqslant C\max\{(u+\kappa)^{p-1}, v+\kappa\}$ and $0\leqslant\partial_v \beta_\kappa(u,v)\leqslant C(v+\kappa)^{q-1}$.
\end{enumerate}
\end{theorem}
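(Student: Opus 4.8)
\smallskip
\noindent\textbf{Proof proposal.} The plan is to deduce (i$'$)--(iii$'$) from the corresponding items of Theorem~\ref{bejaz}, using only that $Q_\kappa=\psi_\kappa*Q$, that $\psi_\kappa\geqslant0$ is a radially nonincreasing mollifier of unit mass supported in the ball of radius $\kappa$, and that convolution with the smooth kernel $\psi_\kappa$ commutes with differentiation; that $Q_\kappa\in C^\infty(\R^{n+1})$ is then automatic. For (i$'$) I would write $2Q_\kappa(\zeta,\eta)=\int\psi_\kappa(y',y'')\,\beta(\mod{\zeta-y'},\mod{\eta-y''})\wrt y'\wrt y''$, apply Theorem~\ref{bejaz}(\ref{anterija}) inside the integral, and use that on the support of $\psi_\kappa$ one has $\mod{\zeta-y'}\leqslant\mod\zeta+\kappa$ and $\mod{\eta-y''}\leqslant\mod\eta+\kappa$ together with the monotonicity of $t\mapsto t^p$ and $t\mapsto t^q$. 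Since $\beta_\kappa(\mod\zeta,\mod\eta)=2Q_\kappa(\zeta,\eta)$ and $\psi_\kappa$ has unit mass, this gives exactly the upper bound in (i$'$); the lower bound is immediate from $Q\geqslant0$.

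For (iii$'$) I would differentiate the convolution once. Since $Q\in C^1$, one has $\partial_\zeta Q_\kappa=\psi_\kappa*\partial_\zeta Q$ and $\nabla_\eta Q_\kappa=\psi_\kappa*\nabla_\eta Q$, and by biradiality $\tfrac12\,\mod{\partial_u\beta_\kappa(\mod\zeta,\mod\eta)}=\mod{\partial_\zeta Q_\kappa(\zeta,\eta)}$ and $\tfrac12\,\mod{\partial_v\beta_\kappa(\mod\zeta,\mod\eta)}=\mod{\nabla_\eta Q_\kappa(\zeta,\eta)}$, so it suffices to bound these two convolutions. Feeding the estimates of Theorem~\ref{bejaz}(\ref{brcko}) into them and using, as before, the monotonicity of $t\mapsto\max\{t^{p-1},s\}$ and $t\mapsto t^{q-1}$ and the fact that the arguments are displaced by at most $\kappa$ on the support of $\psi_\kappa$, one obtains the two upper bounds in (iii$'$). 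Their nonnegativity is handled separately: $\beta$ is nondecreasing in each of its arguments (immediate from its formula, using $q\leqslant2$), hence $Q$ is biradially nondecreasing, and this is inherited by $Q_\kappa$ since $\psi_\kappa$ is radially nonincreasing (the convolution of a biradially nondecreasing function with a radially nonincreasing kernel stays biradially nondecreasing).

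The substantial point is (ii$'$). Here one first has to justify that $\He(Q_\kappa)=\psi_\kappa*\He(Q)$, where $\He(Q)$ is the classical Hessian of $Q$, defined off the null set $U^{-1}(\Upsilon_0)$. For this I would check that $Q\in W^{2,1}_{\mathrm{loc}}(\R^{n+1})$ with distributional Hessian equal a.e.\ to the classical one: $\nabla Q$ is continuous (so has no jump across $U^{-1}(\Upsilon_0)$), it is $C^1$ off that set, and its a.e.\ second derivatives are locally integrable --- the only delicate region being a neighbourhood of $\{\eta=0\}$, where on $\{u^p\leqslant v^q\}$, with $v=\mod\eta$ and $u\leqslant v^{q/p}$, the worst term $\partial_{vv}(u^2v^{2-q})$ is $O\big(v^{-(p-2)/(p-1)}\big)$, an exponent strictly above $-1$, hence integrable in $\R^n$. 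Granting this, convolution transfers onto the Hessian and $H_{Q_\kappa}(\xi;w)=\int\psi_\kappa(\xi-y)\,H_Q(y;w)\wrt y$ for every $\xi$. Applying Theorem~\ref{bejaz}(\ref{kupres}) for a.e.\ $y$ then gives $H_{Q_\kappa}(\xi;w)\geqslant\delta\big(a_\kappa(\xi)\mod{w_1}^2+b_\kappa(\xi)\mod{w_2}^2\big)$ with $a_\kappa=\psi_\kappa*\tau$ and $b_\kappa=\psi_\kappa*\tau^{-1}$ (extend $\tau$ arbitrarily on the null set; both convolutions are automatically finite, since the left-hand side is the Hessian of a smooth function). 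Setting $\tau_\kappa:=a_\kappa>0$, the Cauchy--Schwarz inequality for the probability measure $\psi_\kappa(\xi-y)\wrt y$ gives $a_\kappa(\xi)\,b_\kappa(\xi)\geqslant1$, i.e.\ $b_\kappa\geqslant\tau_\kappa^{-1}$, which upgrades the last inequality to the one claimed in (ii$'$). I expect the only genuine obstacle to be this $W^{2,1}_{\mathrm{loc}}$-step --- confirming that the singular locus of $Q$ is mild enough that the classical and distributional Hessians coincide and the mollification can be moved onto $\He(Q)$; everything else is bookkeeping with monotone majorants, the support of $\psi_\kappa$, and one Cauchy--Schwarz.
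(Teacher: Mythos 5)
Your proposal is correct and follows essentially the same route as the paper: for (ii$'$) the paper also moves the mollifier onto the distributional Hessian of $Q$ (justified, as you do, by $Q\in C^1$ with locally integrable classical second derivatives off the null set $U^{-1}(\Upsilon_0)$), applies Theorem~\ref{bejaz}(\ref{kupres}) pointwise a.e., and then recovers the reciprocal structure via exactly your Cauchy--Schwarz/H\"older step $(\tau*\psi_\kappa)(\tau^{-1}*\psi_\kappa)\geqslant1$. You merely spell out in more detail the $W^{2,1}_{\mathrm{loc}}$ verification near $\{\eta=0\}$ and the monotonicity arguments behind (i$'$) and (iii$'$), which the paper dismisses as routine.
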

\begin{proof}
Properties (\ref{anterija'}') and (\ref{brcko'}') follow from definition of $\beta_\kappa$ and the corresponding properties (\ref{anterija}) and (\ref{brcko}) of $\beta$ in Theorem~\ref{bejaz}. We give a rigorous proof of the estimates for $\pd_v\beta$ which comprise part (iii'). Other inequalities are proven in a very similar way and therefore we will omit their proofs.

\medskip
We start by showing that $\pd_v\beta_\kappa(u,v)\geq 0$ for $u,v\geq 0$. Since $Q_\kappa$ is differentiable and $Q_\kappa(\zeta,\eta)=\beta_\kappa(|\zeta|,|\eta|)$, we have that $\pd_v\beta_\kappa(u,0)=0$ for all $u\geq 0$. Moreover,  for $\zeta\in\R$ and $\eta=(\eta_1,\hdots,\eta_n)\in\R^n$,
\begin{equation}\label{l: 3}
\pd_{\eta_1}Q_\kappa(\zeta,\eta)=\frac{\eta_1}{2|\eta|}\,\pd_v\beta_\kappa(|\zeta|,|\eta|)\,.
\end{equation}
Therefore it suffices to verify that $\pd_{\eta_1}Q_\kappa(\zeta,\eta)\geq 0$ whenever $\eta_1>0$.
By definition,
$$
\pd_{\eta_1}Q_\kappa(\zeta,\eta)
=\int_\R\int_{\R^{n-1}}\int_\R\pd_{\eta_1}Q(\zeta-\zeta',\eta_1-\eta_1',\hat\eta-\hat\eta')\psi_\kappa(\zeta',\eta_1',\hat\eta')\,\wrt\eta_1'\,\wrt\hat\eta'\,\wrt\zeta'\,,
$$
where for $\xi=(\xi_1,\hdots,\xi_n)\in\R^n$ we denote $\hat \xi=(\xi_2,\hdots,\xi_n)\in\R^{n-1}$.
It suffices to prove that the inner integral is positive. To this end, fix $\zeta,\zeta',\hat\eta,\hat\eta'$ and set, for $x\in\R$,
$$
f(x)=\pd_{\eta_1}Q(\zeta-\zeta',x,\hat\eta-\hat\eta')
\hskip 20pt
\text{and}
\hskip 20pt
g(x)=\psi_\kappa(\zeta',x,\hat\eta')\,.
$$
Then the inner integral is precisely $(f*g)(\eta_1)$.
From the properties of $Q$ and $\psi_\kappa$ it emerges that the function $f$ is odd and positive on $\R_+$, while $g$ is even and decreasing on $\R_+$.
It is only left to apply the next lemma and the desired positivity will follow.

\begin{lema}
Let $f,g :\R\rightarrow\R$ be continuous functions. Suppose $f$ is odd and nonnegative %, $f\geqslant 0$
on $\R_+$, while $g$ is even, compactly supported %, nonnegative
and decreasing on $\R_+$. Then $f*g \geqslant 0$ on $\R_+$.
\end{lema}
\begin{proof}
Since $f$ is odd and $g $ even, we can write, for $x\geqslant 0$,
$$
(f*g )(x)=\int_0^xf(y)[g (x-y)-g (x+y)]\,\wrt y+\int_x^\infty f(y)[g (y-x)-g (y+x)]\,\wrt y\,.
$$
Now use the remaining assumptions on $f$ and $g$.
\end{proof}
Next we prove the upper estimates for $\partial_v\beta_\kappa$ in (iii'). Take $u,v\geq 0$ and $\kappa>0$. We would like to show that $|\pd_v\beta_\kappa(u,v)|\leqslant C (v+\kappa)^{q-1}$. We apply \eqref{l: 3} with $\zeta=u$ and $\eta=(v,0,\hdots,0)\in\R^n$. Since $\psi_\kappa$ is nonnegative, it follows that
$$
|\pd_v\beta_\kappa(u,v)|\leqslant 2\int|\pd_{\eta_1} Q(\zeta-\zeta',\eta-\eta')|\psi_\kappa(\zeta',\eta')\,\wrt\zeta'\,\wrt\eta'\,.
$$
By Theorem 3 (iii),
$
|\pd_{\eta_1} Q(\zeta-\zeta',\eta-\eta')|\leqslant C|\eta-\eta'|^{q-1}\,.
$
Finally use that $\psi_\kappa$ is supported in the ball $B(0,\kappa)$
and $\int\psi_\kappa=1$, which yields
$$
|\pd_{v}\beta_\kappa(u,v)|
\leqslant C\left(|\eta|+\kappa\right)^{q-1}
=C\left(v+\kappa\right)^{q-1}
\,,
$$
as desired.

\medskip
We now prove (\ref{kupres'}'). First notice that the second-order distributional derivatives of $Q$ exist and coincide almost everywhere with the usual ones. This is the case because $Q$ belongs to $C^1(\R\times\R^n)$, its second-order partial derivatives exist in $\R^{n+1}\setminus U^{-1}(\Upsilon_0)$, and are locally integrable in $\R^{n+1}$. Consequently, if we set $\xi=(\zeta,\eta)\in \R\times\R^n$ and $w=(w_1,w_2)\in \R\times\R^n$, then we have
$$
H_{Q_\kappa}(\xi;w)
=\int
H_{Q}(\xi-y;w)
\,\psi_\kappa(y)\,dy\,.
$$
By Theorem \ref{bejaz} \eqref{kupres}, the first factor inside the integral is almost everywhere bounded from below by
$
\delta\big(\tau|w_1|^2+\tau^{-1}|w_2|^2\big)\,,
$
where $\tau$ is a function of $\xi-y$. Consequently,
$$
H_{Q_\kappa}(\xi;w)
\geqslant
\delta\big((\tau*\psi_\kappa)(\xi)|w_1|^2+(\tau^{-1}*\psi_\kappa)(\xi)|w_2|^2\big)\,.
$$
Notice that H\"older's inequality gives
$$
(\tau*\psi_\kappa)(\xi)(\tau^{-1}*\psi_\kappa)(\xi)\geqslant
\bigg[
\int
\sqrt{\tau(y)\psi_\kappa(\xi-y)}\,\sqrt{\tau^{-1}(y)\psi_\kappa(\xi-y)}\,dy\bigg]^2=1\,.
$$
It follows that
\begin{equation*}
\label{aint}
H_{Q_\kappa}(\xi;w)
\geqslant
\delta\big(\tau_\kappa|w_1|^2+\tau_\kappa^{-1}|w_2|^2\big)\,,
\end{equation*}
where $\tau_\kappa=\tau*\psi_\kappa$.
\end{proof}

The fact that the $Q_\kappa$'s are radial functions allows us to define Bellman functions on manifolds.

\begin{definition}
For every $\kappa>0$, the regular Bellman function
$$
\widetilde{Q}_\kappa:\R\times T^*M\rightarrow [0,\infty)
$$
is defined on each fiber by the rule
$$
\widetilde{Q}_{\kappa}(\zeta,\eta)=\frac{1}{2}\beta_\kappa(|\zeta|,|\eta|_{T^*_xM}),\ \ \  (\zeta,\eta)\in\R\times T^*_xM.
$$
\end{definition}
We conclude this section with a technical result that will be used in the proof of Theorem~\ref{t: 3}. Consider the operators
$$
\cL^{'}_\varphi=\cL_\varphi-\frac{\partial^2}{\partial t^2}
\hskip 20pt
\text{and}
\hskip 20pt
\vec{\cL^{'}}_\varphi=\vec{\cL_\varphi}-\frac{\partial^2}{\partial t^2},
$$
and denote by $E$ the fiber bundle over $M\times \R_+$ whose fiber at $(x,t)$ is $T^*_xM$.

\begin{lema}\label{l: 1}
Suppose that $\zeta\in C^\infty(M\times\R_+)$, $\eta\in C^\infty(E)$, and define the function $F:M\times\R_+\rightarrow \R$ by the rule
$$
\aligned
F=-\cL^{'}_\varphi \widetilde{Q}_\kappa(\zeta,\eta)
& +\frac{\partial_u{\beta}_{\kappa}(|\zeta|,|\eta|)}{2|\zeta|}\,\zeta\cL^{'}_\varphi\zeta\\
& +\frac{\partial_v{\beta}_{\kappa}(|\zeta|,|\eta|)}{2|\eta|}\left(\langle\vec{\cL_\varphi^{'}}\eta,\eta\rangle-{\rm Ric}_\varphi(\sharp \eta,\sharp \eta)\right).
\endaligned
$$
Then we have
$$
F\geqslant 2\delta|\overline{\nabla}\zeta||\overline{\nabla}\eta|,
$$
where $\overline{\nabla}$ denotes the total covariant derivative on $M\times\R_+$.
\end{lema}
\begin{proof} The lemma follows (by direct computation in exponential local coordinates) from the very definition of $\widetilde{Q}_\kappa$, the Bochner formula \cite[eq. (0.3)]{Bakry1} and Theorem~\ref{t: regular bellman} ({\rm ii'}). For the reader's convenience, we include a full proof.

\medskip
First note that
\begin{align}\label{eq: 9}
&|\overline{\nabla}\zeta(x,t)|^2_{T^*_{(x,t)}(M\times\R_+)}=|\nabla\zeta(x,t)|^2_{T^{*}_xM}+|\partial_t\zeta(x,t)|^2,\\
&|\overline{\nabla}\eta(x,t)|^2_{T^{0,2}_{(x,t)}(M\times\R_+)}=|\nabla\eta(x,t)|^2_{T^{0,2}_xM}+|\partial_t\eta(x,t)|^2_{T^*_xM}.\nonumber
\end{align}
In order to simplify the calculations, we set
$$
\widetilde \beta(u,v)=\frac{1}{2}\beta_\kappa(u^{1/2},v^{1/2}),
$$
so that
$$
\widetilde Q_\kappa(\zeta(x,t),\eta(x,t))=\widetilde\beta(|\zeta(x,t)|^2,|\eta(x,t)|^2_{T^*_xM}).
$$
We now write the function $F$ in terms of $|\zeta|$, $|\eta|$, $|\nabla\zeta|^2$, $|\partial_t\zeta|^2$,  $|\nabla\eta|^2$, $|\partial\eta|^2$, $\wrt |\zeta|^2$, $\partial_t|\zeta|^2$, $\wrt |\eta|^2$, $\partial_t |\eta|^2$ and the partial derivatives of $\widetilde\beta$.

\medskip
By the very definition of $\cL_\varphi$,
$$
\aligned
-\cL_\varphi\widetilde{Q}_\kappa=&-\partial_u\widetilde\beta\cdot\cL_\varphi|\zeta|^2-\partial_v\widetilde\beta\cdot\cL_\varphi|\eta|^2\\
&+\partial^2_{uu}\widetilde\beta\cdot\langle\wrt |\zeta|^2,\wrt |\zeta|^2\rangle+2\partial^2_{uv}\widetilde\beta\cdot\langle\wrt |\zeta|^2,\wrt |\eta|^2\rangle\\
& +\partial^2_{vv}\widetilde\beta\cdot\langle\wrt |\eta|^2,\wrt |\eta|^2\rangle,
\endaligned
$$
and
$$
\aligned
\partial^2_{tt}\widetilde{Q}_\kappa=&\hskip 7pt \partial_u\widetilde\beta\cdot\partial^2_{tt}|\zeta|^2+\partial_v\widetilde\beta\cdot\partial^2_{tt}|\eta|^2\\
&+\partial^2_{uu}\widetilde\beta\cdot\langle\partial_t |\zeta|^2,\partial_t |\zeta|^2\rangle+2\partial^2_{uv}\widetilde\beta\cdot\langle\partial_t |\zeta|^2,\partial_t |\eta|^2\rangle\\
&+\partial^2_{vv}\widetilde\beta\cdot\langle\partial_t |\eta|^2,\partial_t |\eta|^2\rangle.
\endaligned
$$
Moreover,
$$
-\cL_\varphi|\zeta|^2=2|\nabla\zeta|^2-2\zeta\cL_\varphi\zeta
% ,\hskip 40pt \partial^2_{tt}|\zeta|^2=2|\partial_t\zeta|^2+2\zeta\partial^2_{tt}\zeta,
$$
and by the Bochner formula \cite[eq. (0.3)]{Bakry1},
\begin{equation*}
-\cL_\varphi|\eta|^2=2|\nabla\eta|^2-2\langle\vec{\cL_\varphi}\eta,\eta\rangle+2{\rm Ric}_\varphi(\sharp\eta,\sharp\eta).
\end{equation*}
Since $\zeta\in C^\infty(M\times\R_+)$ and $\eta\in C^\infty(E)$,
$$
\aligned
\partial^2_{tt}|\zeta|^2& =2|\partial_t\zeta|^2+2\partial^2_{tt}\zeta\cdot \zeta\,,\\
\partial^2_{tt}|\eta|^2& =2|\partial_t\eta|^2+2\langle\partial^2_{tt}\eta,\eta\rangle.
\endaligned
$$
It follows that
\begin{align}\label{eq: 7}
%\aligned
F=&\left[2\partial_u\widetilde\beta\cdot |\nabla\zeta|^2+\partial^2_{uu}\widetilde\beta\cdot \langle\wrt|\zeta|^2,\wrt|\zeta|^2\rangle\right]\nonumber\\
&+2\partial^2_{uv}\widetilde\beta\cdot \langle\wrt|\zeta|^2,\wrt|\eta|^2\rangle\nonumber\\
&+\left[2\partial_v\widetilde\beta\cdot |\nabla\eta|^2+\partial^2_{vv}\widetilde\beta\cdot \langle\wrt|\eta|^2,\wrt|\eta|^2\rangle\right]\\
&+\left[2\partial_u\widetilde\beta\cdot |\partial_t\zeta|^2+\partial^2_{uu}\widetilde\beta\cdot \langle\partial_t|\zeta|^2,\partial_t|\zeta|^2\rangle\right]\nonumber\\
&+2\partial^2_{uv}\widetilde\beta\cdot \langle\partial_t|\zeta|^2,\partial_t|\eta|^2\rangle\nonumber\\
&+\left[2\partial_v\widetilde\beta\cdot |\partial_t\eta|^2+\partial^2_{vv}\widetilde\beta\cdot \langle\partial_t|\eta|^2,\partial_t|\eta|^2\rangle\right].\nonumber
%\endaligned
\end{align}
We now verify the inequality
$$
 F(x,t)\geqslant 2\delta|\overline{\nabla}\zeta(x,t)||\overline{\nabla}\eta(x,t)|
$$
at any point $(x,t)\in M\times \R_+$.
Fix $(x,t)\in M\times \R_+$ and exponential local coordinates $\overline{x}=(x^1,\dots x^n)$ centered at $x$.
Then locally, $g^{-1}=(g^{ij}(\overline{x}))$, $\zeta=\widetilde\zeta(\overline{x},t)$, $\eta=\tilde\eta_1(\overline{x},t)\wrt x^1+\cdots+\tilde\eta_n(\overline{x},t)\wrt x^n$, $\zeta(x,t)=\tilde\zeta(0,t)$ and $\eta(x,t)=\tilde \eta_1(0,t)\wrt x^1_{|_{\overline{x}=0}},\dots,\tilde \eta_n(0,t)\wrt x^n_{|_{\overline{x}=0}}$.

Since $g^{ij}(0)=\delta_{ij}$ and the Christoffel symbols satisfy $\Gamma^{ij}_k(0)=0$, we have that
\begin{align}
\label{eq: 8}
| \eta(x,t)|^2_{T^*_xM}& =\sum_{i}|\tilde\eta_i(0,t)|^2,\nonumber\\
|\nabla \eta(x,t)|^2_{T^{0,2}_xM}& =\sum_{k,i}|\partial_k\tilde\eta_i(0,t)|^2,\\
|\nabla \zeta(x,t)|^2_{T^*_xM}& =\sum_{k}|\partial_k\tilde\zeta(0,t)|^2.\nonumber
\end{align}
Moreover,
$$
\aligned
&\langle\wrt|\eta|^2(x,t),\wrt|\eta|^2(x,t)\rangle_{T^*_xM}=4\sum_k\sum_{i,j}\tilde\eta_i(0,t)\tilde\eta_j(0,t)
\partial_k\tilde\eta_i(0,t)\partial_k\tilde\eta_j(0,t),\\
&\langle\wrt|\zeta|^2(x,t),\wrt|\zeta|^2(x,t)\rangle_{T^*_xM}=4\sum_k\tilde\zeta(0,t)^2(\partial_k\tilde\zeta(0,t))^2.
\endaligned
$$
Define $\tilde\eta(0,t)=(\tilde\eta_1(0,t),\dots,\tilde\eta_n(0,t))$. By the identity
$$
Q_\kappa(\zeta,\eta)=\frac{1}{2}\beta_\kappa(|\zeta|,|\eta|)=\widetilde\beta (\zeta^2,\eta^2_1+\cdots +\eta^2_n)
\hskip 30pt\text{for }\zeta\in\R, \eta\in\R^n,
$$
we obtain
$$
\aligned
&2\partial_u\widetilde\beta(|\zeta(x,t)|^2,|\eta(x,t)|^2)\cdot |\nabla\zeta(x,t)|^2\\
&+\partial^2_{uu}\widetilde\beta(|\zeta(x,t)|^2,|\eta(x,t)|^2)\cdot \langle\wrt|\zeta(x,t)|^2,\wrt|\zeta(x,t)|^2\rangle\\
&\hskip 40pt=\sum_k\partial^2_{\zeta\zeta}Q_\kappa(\tilde\zeta(0,t),\tilde\eta(0,t))(\partial_k\tilde\zeta(0,t))^2,
\endaligned
$$
$$
\aligned
&\partial^2_{uv}\widetilde\beta(|\zeta(x,t)|^2,|\eta(x,t)|^2)\cdot\langle\wrt|\zeta(x,t)|^2,\wrt|\eta(x,t)|^2\rangle\\
&\hskip 40pt=\sum_k\sum_i\partial^2_{\zeta\eta_i}Q_\kappa(\tilde\zeta(0,t),\tilde\eta(0,t))\partial_k\tilde\zeta(0,t)\partial_k\tilde\eta_i(0,t),
\endaligned
$$
and
$$
\aligned
&2\partial_v\widetilde\beta(|\zeta(x,t)|^2,|\eta(x,t)|^2)\cdot |\nabla\eta(x,t)|^2\\
&+\partial^2_{vv}\widetilde\beta(|\zeta(x,t)|^2,|\eta(x,t)|^2)\cdot\langle\wrt|\eta(x,t)|^2,\wrt|\eta(x,t)|^2\rangle\\
&\hskip 40pt=\sum_k\sum_{i,j}\partial^2_{\eta_i\eta_j}Q_\kappa(\tilde\zeta(0,t),\tilde\eta(0,t))\partial_k\tilde\eta_i(0,t)\partial_k\tilde\eta_j(0,t).
\endaligned
$$
A similar computation gives
%-----------------------
$$
\aligned
&2\partial_u\widetilde\beta(|\zeta(x,t)|^2,|\eta(x,t)|^2)\cdot |\partial_t\zeta(x,t)|^2\\
&+\partial^2_{uu}\widetilde\beta(|\zeta(x,t)|^2,|\eta(x,t)|^2)\cdot \langle\partial_t|\zeta(x,t)|^2,\partial_t|\zeta(x,t)|^2\rangle\\
&\hskip 40pt=\partial^2_{\zeta\zeta}Q_\kappa(\tilde\zeta(0,t),\tilde\eta(0,t))(\partial_t\tilde\zeta(0,t))^2,
\endaligned
$$

$$
\aligned
&2\partial_v\widetilde\beta(|\zeta(x,t)|^2,|\eta(x,t)|^2)\cdot |\partial_t\eta(x,t)|^2\\
&+\partial^2_{vv}\widetilde\beta(|\zeta(x,t)|^2,|\eta(x,t)|^2)\cdot\langle\partial_t|\eta(x,t)|^2,\partial_t|\eta(x,t)|^2\rangle\\
&\hskip 40pt=\sum_{i,j}\partial^2_{\eta_i\eta_j}Q_\kappa(\tilde\zeta(0,t),\tilde\eta(0,t))\partial_t\tilde\eta_i(0,t)\partial_t\tilde\eta_j(0,t)
\endaligned
$$
and
$$
\aligned
&\partial^2_{uv}\widetilde\beta(|\zeta(x,t)|^2,|\eta(x,t)|^2)\cdot\langle\partial_t|\zeta(x,t)|^2,\partial_t|\eta(x,t)|^2\rangle\\
&\hskip 40pt=\sum_i\partial^2_{\zeta\eta_i}Q_\kappa(\tilde\zeta(0,t),\tilde\eta(0,t))\partial_t\tilde\zeta(0,t)\partial_t\tilde\eta_i(0,t).
\endaligned
$$
It follows from \eqref{eq: 7} that
$$
F(x,t)=\sum^{n}_{k=0}H_{Q_\kappa}\left(\big(\tilde\zeta(0,t),\tilde\eta(0,t)\big);\big(\partial_k\tilde\zeta(0,t),\partial_k\tilde\eta(0,t)\big)\right),
$$
where $\partial_{0}=\partial_t$. Hence, by Theorem~\ref{t: regular bellman} ({\rm ii}'), \eqref{eq: 8} and \eqref{eq: 9},
$$
\aligned
F(x,t)&\geqslant 2\delta\sqrt{\sum^{n}_{k=0}|\partial_k\tilde\zeta(0,t)|^2}\sqrt{\sum^{n}_{k=0}|\partial_k\tilde\eta(0,t)|^2_{\R^n}}\\
&=2\delta|\overline{\nabla}\zeta(x,t)||\overline{\nabla}\eta(x,t)|,
\endaligned
$$
as required.
\end{proof}

\section{Proof of Theorem \ref{t: 3}}

We first prove the theorem for $p\geqslant 2$. Let $f\in C^\infty_c(M)$ and $\omega\in C^\infty_c(T^*M)$.
In view of Remark \ref{ljudski oblik} we can assume $f,\omega$ to be real-valued. Fix $o\in M$ and $\epsilon>0$. For every $s,l>0$, define $K_{s,l}=\overline{B(o,2l)}\times[1/s,s]$ and
\begin{equation*}
\label{eq: 1}
\kappa_{s,l}=\epsilon\inf_{(x,t)\in K_{s,l}}\min\{P^a_{t}|f|(x)\ ,\ P^0_{t}|\omega|(x)\}.
\end{equation*}
Since $P^a_t$ is an integral operator with positive kernel, and $(x,t)\mapsto P^a_tu(x)$ is continuous for all nice $u$, it follows that $\kappa_{s,l}>0$. Next define the function $b_{s,l}$ by setting
$$
b_{s,l}(x,t)=\widetilde{Q}_{\kappa_{s,l}}(P^a_tf(x),\vec P^a_t\omega(x)),
$$
for all $(x,t)\in M\times\R_+$.

Similar to the Euclidean case \cite{DV-Sch} the bulk of the proof of Theorem \ref{t: 3} will consist of estimating an integral involving $\cL^{'}_\varphi b_{s,l}$ from below and above. This will be the content of Propositions \ref{p: 2} and \ref{p: 3}, respectively.

\begin{proposition}
\label{p: 2}
Suppose that ${\rm Ric}_\varphi\geqslant -a^2g$. Then, for all $(x,t)\in M\times\R_+$,
$$
-\cL^{'}_\varphi b_{s,l}(x,t)\geqslant 2\delta|\overline{\nabla}P^a_tf(x)||\overline{\nabla}\vec{P^a_t}\omega(x)|\,.
$$
\end{proposition}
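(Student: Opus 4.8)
The plan is to apply Lemma~\ref{l: 1} with $\zeta=P^a_tf$, $\eta=\vec P^a_t\omega$ and $\kappa=\kappa_{s,l}$, and then to show that, under ${\rm Ric}_\varphi\geqslant-a^2g$, each of the three terms on its right-hand side is bounded below so as to produce the desired estimate. This is legitimate: since $\kappa_{s,l}>0$, Theorem~\ref{t: regular bellman} gives $Q_{\kappa_{s,l}}\in C^\infty(\R^{n+1})$, while $P^a_t$ and $\vec P^a_t$ send compactly supported smooth data to smooth sections, so $\zeta,\eta$ are smooth on $M\times\R_+$ and $b_{s,l}=\widetilde Q_{\kappa_{s,l}}(\zeta,\eta)$ is smooth, whence Lemma~\ref{l: 1} applies at every $(x,t)$.

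The conceptual core consists of two algebraic identities expressing that $P^a_t$ is the \emph{subordinated} (Poisson) semigroup. Because $P^a_t=e^{-t(a^2\cI+\cL_\varphi)^{1/2}}$, differentiating twice in $t$ gives $\partial_t^2\zeta=(a^2\cI+\cL_\varphi)\zeta$, hence
$$
\cL^{'}_\varphi\zeta=\cL_\varphi\zeta-\partial_t^2\zeta=-a^2\zeta\,,
$$
and identically $\vec{\cL^{'}}_\varphi\eta=-a^2\eta$ as a $1$-form at each point, since $\vec P^a_t=e^{-t(a^2\cI+\vec{\cL_\varphi})^{1/2}}$. Substituting the first identity into the second term of Lemma~\ref{l: 1} turns it into $\tfrac12 a^2|\zeta|\,\partial_u\beta_{\kappa_{s,l}}(|\zeta|,|\eta|)$, which is $\geqslant 0$ by Theorem~\ref{t: regular bellman}~(iii'). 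Substituting the second identity into the third term turns it into
$$
\frac{\partial_v\beta_{\kappa_{s,l}}(|\zeta|,|\eta|)}{2|\eta|}\Big({\rm Ric}_\varphi(\sharp\eta,\sharp\eta)+a^2|\eta|^2\Big)\,,
$$
and since $\sharp$ is a fibrewise isometry, the hypothesis ${\rm Ric}_\varphi\geqslant-a^2g$ gives ${\rm Ric}_\varphi(\sharp\eta,\sharp\eta)\geqslant-a^2|\eta|^2$, so this term is $\geqslant 0$ as well (again using $\partial_v\beta_{\kappa_{s,l}}\geqslant0$).

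It remains to bound the Hessian sum from below. By Theorem~\ref{t: regular bellman}~(ii'), for each $i$ one has $H_{Q_{\kappa_{s,l}}}(\tilde S_x(v);\tilde S_x(\nabla_iv))\geqslant\delta\big(\tau_\kappa|\nabla_i\zeta|^2+\tau_\kappa^{-1}|\nabla_i\eta|^2\big)$ with a single $\tau_\kappa=\tau_\kappa(|\zeta|,|\eta|)$, because $\tilde S_x(\nabla_iv)=(\nabla_i\zeta,S_x(\nabla_i\eta))$ and $|S_x(\nabla_i\eta)|_{\R^n}=|\nabla_i\eta|_{T^*_xM}$. Summing over $i=1,\dots,n+1$ (with $\nabla_{n+1}=\nabla_t$) and using that exponential coordinates centred at $x$ are orthonormal at $x$, so that $\sum_i|\nabla_i\zeta|^2=|\overline{\nabla}\zeta|^2$ and $\sum_i|\nabla_i\eta|^2=|\overline{\nabla}\eta|^2$, the Hessian sum is at least $\delta\big(\tau_\kappa|\overline{\nabla}\zeta|^2+\tau_\kappa^{-1}|\overline{\nabla}\eta|^2\big)\geqslant 2\delta|\overline{\nabla}\zeta|\,|\overline{\nabla}\eta|$ by the arithmetic-geometric mean inequality. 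Adding back the two nonnegative terms yields $-\cL^{'}_\varphi b_{s,l}\geqslant 2\delta|\overline{\nabla}P^a_tf|\,|\overline{\nabla}\vec P^a_t\omega|$, as claimed.

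The only delicate points are of bookkeeping nature: one should check that the factors $\partial_u\beta_\kappa/|\zeta|$ and $\partial_v\beta_\kappa/|\eta|$ appearing in Lemma~\ref{l: 1} cause no trouble where $\zeta$ or $\eta$ vanishes (biradiality of $Q_\kappa$, hence of $\beta_\kappa$, makes $u\mapsto\partial_u\beta_\kappa(u,v)$ smooth and odd, so $\partial_u\beta_\kappa(u,v)/u$ extends smoothly, and similarly in $v$), and one must keep careful track of the exponential-coordinate identifications $\tilde S_x$. I expect the AM--GM step and the two cancellations above to carry all the real content; everything else is routine verification.
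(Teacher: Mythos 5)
Your proof is correct and follows the same route as the paper: apply Lemma~\ref{l: 1} with $\zeta=P^a_tf$, $\eta=\vec P^a_t\omega$, use the Poisson-semigroup identities $\cL^{'}_\varphi\zeta=-a^2\zeta$ and $\vec{\cL^{'}}_\varphi\eta=-a^2\eta$ together with the curvature lower bound and Theorem~\ref{t: regular bellman}~(\ref{brcko'}') to discard the last two terms, then apply Theorem~\ref{t: regular bellman}~(\ref{kupres'}') and AM--GM to the Hessian sum. You merely supply a bit more detail (the justification for smoothness, the explicit subordination computation, and the remark about $\partial_u\beta_\kappa/u$ near $u=0$) than the paper's terse version.
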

\begin{proof}
We apply Lemma~\ref{l: 1} with $\kappa=\kappa_{s,l}$, $\zeta=P^a_tf$ and $\eta=\vec P^a_t\omega$. Since $\cL^{'}_\varphi P^a_tf=-a^2P^a_tf$, $\vec{ \cL^{'}}_\varphi \vec P^a_t\omega=-a^2\vec P^a_t\omega$, ${\rm Ric}_\varphi(\sharp \vec P^a_t\omega,\sharp \vec P^a_t\omega)\geqslant -a^2|\vec P^a_t\omega|^2$ and, by Theorem~\ref{t: regular bellman} (\ref{brcko'}'), the first-order partial derivatives of $\beta_{\kappa_{s,l}}$ are nonnegative,
$$
-\cL^{'}_\varphi b_{s,l}(x,t)\geqslant F(x,t)\geqslant 2\delta|\overline{\nabla}P^a_tf(x)||\overline{\nabla}\vec{P^a_t}\omega(x)|,
$$
which is the statement from the proposition.
\end{proof}
In order to estimate $-\cL^{'}_{\varphi}b_{s,l}$ from above we need a preliminary result.
\begin{lema}
\label{l: 2}
Suppose that ${\rm Ric}_\varphi\geq -a^2g$. Then, for every $(x,t)\in K_{s,l}$,
\begin{equation*}
\label{eq: 2}
b_{s,l}(x,t)\leq \frac{1+\delta}2\,(1+\epsilon)^{p}\left(P^a_t|f|^p(x)+P^0_t|\omega|^q(x)\right)\,.
\end{equation*}
Moreover, there exists $C=C(\epsilon,p)$ such that, for every $(x,t)\in K_{s,l}$,
\begin{equation*}
\label{eq: 3}
\aligned
\mod{\partial_tb_{s,l}(x,t)}\leqslant C\Big(&\max\{(P^a_t|f|(x))^{p-1},P^0_t|\omega|(x)\}\mod{\partial_t P^a_tf(x)}\\
&+(P^0_t|\omega|(x))^{q-1}|\partial_t \vec P^a_t\omega(x)|\Big)\,.
\endaligned
\end{equation*}
\end{lema}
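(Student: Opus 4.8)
The plan is to express $b_{s,l}$ through the biradial profile $\beta_{\kappa_{s,l}}$, insert the pointwise bounds of Theorem~\ref{t: regular bellman}, and exploit the specific choice of $\kappa_{s,l}$ in \eqref{eq: 1}. The only structural input is the following observation: for $(x,t)\in K_{s,l}$, \eqref{eq: 1} gives $\kappa_{s,l}\leqslant\epsilon\,P^a_t|f|(x)$ and $\kappa_{s,l}\leqslant\epsilon\,P^0_t|\omega|(x)$, while Lemma~\ref{l: commutativita}\eqref{charles},\eqref{dr.j} with $r=1$ give $|P^a_tf(x)|\leqslant P^a_t|f|(x)$ and $|\vec{P^a_t}\omega(x)|_{T^*_xM}\leqslant P^0_t|\omega|(x)$. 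Writing $u=|P^a_tf(x)|$ and $v=|\vec{P^a_t}\omega(x)|_{T^*_xM}$, these combine to
$$
u+\kappa_{s,l}\leqslant(1+\epsilon)\,P^a_t|f|(x)\qquad\text{and}\qquad v+\kappa_{s,l}\leqslant(1+\epsilon)\,P^0_t|\omega|(x)
$$
on $K_{s,l}$.

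For the first estimate, I would use $b_{s,l}(x,t)=\tfrac12\beta_{\kappa_{s,l}}(u,v)$ together with Theorem~\ref{t: regular bellman}~(\ref{anterija'}') to get $2b_{s,l}(x,t)\leqslant(1+\delta)\big[(u+\kappa_{s,l})^p+(v+\kappa_{s,l})^q\big]$. Substituting the two displayed inequalities and using $p\geqslant 2\geqslant q$ (so that $(1+\epsilon)^q\leqslant(1+\epsilon)^p$), this is at most $(1+\delta)(1+\epsilon)^p\big[(P^a_t|f|(x))^p+(P^0_t|\omega|(x))^q\big]$; applying Jensen's inequality to the sub-Markovian operators $P^a_t$ and $P^0_t$ (as in the proof of Lemma~\ref{l: commutativita}\eqref{charles}) replaces $(P^a_t|f|(x))^p$ by $P^a_t|f|^p(x)$ and $(P^0_t|\omega|(x))^q$ by $P^0_t|\omega|^q(x)$. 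Dividing by $2$ gives the claimed bound.

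For the second estimate, I would differentiate $b_{s,l}(x,t)=Q_{\kappa_{s,l}}\big(P^a_tf(x),S_x(\vec{P^a_t}\omega(x))\big)$ in $t$, working in fixed exponential coordinates at $x$ so that the isometry $S_x$ does not depend on $t$. Since $Q_{\kappa_{s,l}}\in C^\infty(\R^{n+1})$ is biradial with profile $\tfrac12\beta_{\kappa_{s,l}}$, its partial derivatives satisfy $|\partial_\zeta Q_{\kappa_{s,l}}(\zeta,\eta)|\leqslant\tfrac12\,\partial_u\beta_{\kappa_{s,l}}(|\zeta|,|\eta|)$ and $|\nabla_\eta Q_{\kappa_{s,l}}(\zeta,\eta)|\leqslant\tfrac12\,\partial_v\beta_{\kappa_{s,l}}(|\zeta|,|\eta|)$, with both sides vanishing wherever the relevant argument does. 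The chain rule, together with $|S_x\partial_t\vec{P^a_t}\omega(x)|_{\R^n}=|\partial_t\vec{P^a_t}\omega(x)|_{T^*_xM}$, then yields
$$
|\partial_tb_{s,l}(x,t)|\leqslant\tfrac12\,\partial_u\beta_{\kappa_{s,l}}(u,v)\,|\partial_tP^a_tf(x)|+\tfrac12\,\partial_v\beta_{\kappa_{s,l}}(u,v)\,|\partial_t\vec{P^a_t}\omega(x)|_{T^*_xM}.
$$
Now Theorem~\ref{t: regular bellman}~(\ref{brcko'}') bounds $\partial_u\beta_{\kappa_{s,l}}(u,v)\leqslant C(p)\max\{(u+\kappa_{s,l})^{p-1},v+\kappa_{s,l}\}$ and $\partial_v\beta_{\kappa_{s,l}}(u,v)\leqslant C(p)(v+\kappa_{s,l})^{q-1}$; inserting the two displayed inequalities for $u+\kappa_{s,l}$ and $v+\kappa_{s,l}$ and absorbing the factors $(1+\epsilon)^{p-1}$ and $(1+\epsilon)^{q-1}$ into a constant $C=C(\epsilon,p)$ gives the asserted estimate.

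The one step requiring care is the differentiation at points where $P^a_tf(x)=0$ or $\vec{P^a_t}\omega(x)=0$: there $(u,v)\mapsto\beta_{\kappa_{s,l}}(u,v)$ cannot be composed naively with $|\cdot|$. This is exactly why one differentiates the genuinely $C^\infty$ function $Q_{\kappa_{s,l}}$ and uses the elementary fact that a $C^1$ biradial function on $\R\times\R^n$ has $|\partial_\zeta|$ and $|\nabla_\eta|$ equal to the moduli of the partial derivatives of its profile, both of which vanish on the respective coordinate subspaces. Everything else is routine bookkeeping of constants.
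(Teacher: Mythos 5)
Your proof is correct and follows essentially the same route as the paper: both bound $b_{s,l}$ via Theorem~\ref{t: regular bellman}~(\ref{anterija'}') combined with the pointwise domination in Lemma~\ref{l: commutativita}~\eqref{charles},\eqref{dr.j} and the choice \eqref{eq: 1} of $\kappa_{s,l}$, and both derive the $t$-derivative bound from the chain rule through the smooth biradial function $Q_{\kappa_{s,l}}$ together with Theorem~\ref{t: regular bellman}~(\ref{brcko'}'). The only difference is that you make explicit the point about differentiating $Q_{\kappa_{s,l}}$ (rather than $\beta_{\kappa_{s,l}}\circ|\cdot|$) at zeros of $P^a_tf$ or $\vec{P^a_t}\omega$, which the paper leaves implicit but which is a correct and worthwhile clarification.
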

\begin{proof}
By combining Theorem~\ref{t: regular bellman} (\ref{anterija'}') with Lemma~\ref{l: commutativita} \eqref{charles} and \eqref{dr.j}, we get
 $$
 b_{s,l}(x,t)\leq \frac{1+\delta}2\,\Big[(P^a_t|f|(x)+\kappa_{s,l})^p+(P^0_t|\omega|(x)+\kappa_{s,l})^q\Big]\,.
 $$
The first part of the lemma now follows from the definition of $\kappa_{s,l}$ and Lemma~\ref{l: commutativita} \eqref{charles}, \eqref{dr.j}.

Observe that
$$
\aligned
2|\partial_tb_{s,l}(x,t)|
\leqslant& \hskip 12pt\partial_u\beta_{\kappa_{s,l}}(|P^a_tf(x)|,|\vec P^a_t\omega(x)|)|\partial_t P^a_tf(x)|\
\\
&+\partial_v\beta_{\kappa_{s,l}}(|P^a_tf(x)|,|\vec P^a_t\omega(x)|)|\partial_t \vec P^a_t\omega(x)|.
\endaligned
$$
The second part of the lemma follows from the definition of $\kappa_{s,l}$, by combining the above inequality with Theorem~\ref{t: regular bellman} (\ref{brcko'}') and Lemma~\ref{l: commutativita} \eqref{charles} and \eqref{dr.j}.
\end{proof}

\begin{proposition}\label{p: 3}
Suppose that ${\rm Ric}_\varphi\geqslant -a^2g$. Then
$$
\aligned
\limsup_{s\rightarrow\infty}\limsup_{l\rightarrow\infty}\int_{1/s}^s\int_{B(o,l)}-\cL^{'}_\varphi& b_{s,l}(x,t)\wrt\mu_\varphi(x)\,t\wrt t\\
& \leqslant \frac{1+\delta}2\,(1+\epsilon)^{p}\left( \nor{f}^p_p+\nor{\omega}^q_q\right)\,.
\endaligned
$$
\end{proposition}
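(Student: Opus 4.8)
The plan is to integrate the pointwise inequality of Proposition~\ref{p: 2} against $t\wrt\mu_\varphi\wrt t$ and to compare it with the corresponding integral of $-\cL^{'}_\varphi b_{s,l}$ evaluated by integration by parts. Writing $-\cL^{'}_\varphi b_{s,l}=-\cL_\varphi b_{s,l}+\partial_{tt}b_{s,l}$, with $\cL_\varphi$ acting in the $M$-variable, the first move I would make is to use the positivity $-\cL^{'}_\varphi b_{s,l}\geqslant0$ (Proposition~\ref{p: 2}) to trade the domain $B(o,l)$ for a global integral: fixing $\chi_l\in C^\infty_c(M)$ with $\chi_l\equiv1$ on $B(o,l)$, ${\rm supp}\,\chi_l\subset B(o,2l)$, $0\leqslant\chi_l\leqslant1$, $|\nabla\chi_l|\leqslant2/l$, we have $\mathbf{1}_{B(o,l)}(-\cL^{'}_\varphi b_{s,l})\leqslant\chi_l(-\cL^{'}_\varphi b_{s,l})$, hence
$$
\int_{1/s}^{s}\!\int_{B(o,l)}\!-\cL^{'}_\varphi b_{s,l}\,\wrt\mu_\varphi\,t\wrt t\ \leqslant\ \int_{1/s}^{s}\!\int_{M}\chi_l\,(-\cL^{'}_\varphi b_{s,l})\,\wrt\mu_\varphi\,t\wrt t\ =:\ I_{s,l},
$$
and it is enough to bound $\limsup_{s\to\infty}\limsup_{l\to\infty}I_{s,l}$ by the right-hand side of the proposition.

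Next I would integrate in the $M$-variable: since $\chi_l$ has compact support, $\int_M\chi_l(-\cL_\varphi b_{s,l})\,\wrt\mu_\varphi=-\int_M\langle\nabla b_{s,l},\nabla\chi_l\rangle\,\wrt\mu_\varphi$, while $\int_M\chi_l\,\partial_{tt}b_{s,l}\,\wrt\mu_\varphi=h_l''(t)$ with $h_l(t):=\int_M\chi_l\,b_{s,l}(\cdot,t)\,\wrt\mu_\varphi$. Two integrations by parts in $t$ over $[1/s,s]$ then give
$$
I_{s,l}=R_{s,l}+s\,h_l'(s)-h_l(s)-\tfrac1s h_l'\!\big(\tfrac1s\big)+h_l\!\big(\tfrac1s\big),\qquad R_{s,l}:=-\int_{1/s}^{s}\!\int_M\langle\nabla b_{s,l},\nabla\chi_l\rangle\,\wrt\mu_\varphi\,t\wrt t.
$$
Here $-h_l(s)\leqslant0$ is simply discarded. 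For $h_l(1/s)$ I would use that $(x,1/s)\in K_{s,l}$ for $x\in{\rm supp}\,\chi_l\subset\overline{B(o,2l)}$, so the first estimate of Lemma~\ref{l: 2} applies pointwise there; combined with $0\leqslant\chi_l\leqslant1$ and the $L^1(\mu_\varphi)$-contractivity of $P^a_t$ and $P^0_t$ this yields $h_l(1/s)\leqslant\frac{1+\delta}{2}(1+\epsilon)^p\big(\nor{f}^p_p+\nor{\omega}^q_q\big)$, uniformly in $s$ and $l$ — precisely the desired bound. Everything thus reduces to showing that $R_{s,l}$, $s\,h_l'(s)$ and $\tfrac1s h_l'(1/s)$ are negligible in the iterated limit.

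For $R_{s,l}$, the chain rule together with $\partial_u\beta_{\kappa_{s,l}},\partial_v\beta_{\kappa_{s,l}}\geqslant0$ gives $|\nabla b_{s,l}|\leqslant\frac12\partial_u\beta_{\kappa_{s,l}}|\nabla P^a_tf|+\frac12\partial_v\beta_{\kappa_{s,l}}|\nabla\vec P^a_t\omega|$; on ${\rm supp}\,\chi_l$, Theorem~\ref{t: regular bellman}~(\ref{brcko'}'), Lemma~\ref{l: commutativita}~\eqref{charles},\eqref{dr.j} and the definition~\eqref{eq: 1} of $\kappa_{s,l}$ bound this by $C_{\epsilon,p}\big(\max\{(P^a_t|f|)^{p-1},P^0_t|\omega|\}\,|\nabla P^a_tf|+(P^0_t|\omega|)^{q-1}|\nabla\vec P^a_t\omega|\big)$. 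Since $|\nabla\chi_l|\leqslant2/l$ is supported in the annulus $B(o,2l)\setminus B(o,l)$ and, for $t$ in the compact interval $[1/s,s]$, the Poisson integrals of the compactly supported data $f,\omega$ and their gradients decay fast at spatial infinity (Gaussian-type off-diagonal bounds, against a volume growth controlled by ${\rm Ric}_\varphi\geqslant-a^2g$), the integral of the above product over that annulus stays bounded, so $R_{s,l}=O_{s,\epsilon}(1/l)\to0$ as $l\to\infty$. For the temporal terms, $h_l'(t)=\int_M\chi_l\,\partial_t b_{s,l}(\cdot,t)\,\wrt\mu_\varphi$ is controlled, uniformly in $l$, by the second estimate of Lemma~\ref{l: 2}; using $\partial_tP^a_tf=-(a^2\cI+\cL_\varphi)^{1/2}P^a_tf$ and $\partial_t\vec P^a_t\omega=-(a^2\cI+\vec{\cL_\varphi})^{1/2}\vec P^a_t\omega$, the prefactor $\tfrac1s$ annihilates the term at $t=1/s$ (the relevant integrand stays bounded as $t=1/s\to0$, $f,\omega$ lying in the domains of the square roots), while $s\,|h_l'(s)|\to0$ as $s\to\infty$ follows from the decay of $(a^2\cI+\cL_\varphi)^{1/2}P^a_t$ and $(a^2\cI+\vec{\cL_\varphi})^{1/2}\vec P^a_t$ as $t\to\infty$ together with the $L^p$- and $L^q$-contractivity of the Poisson semigroups — in the spirit of the argument by which the temporal boundary terms are killed in the proof of Corollary~\ref{c: 2}. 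Collecting everything, $\limsup_{l\to\infty}I_{s,l}\leqslant\frac{1+\delta}{2}(1+\epsilon)^p(\nor{f}^p_p+\nor{\omega}^q_q)+o_s(1)$, and letting $s\to\infty$ finishes the proof.

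The two integrations by parts and the estimates via Lemma~\ref{l: 2} and Theorem~\ref{t: regular bellman} are routine bookkeeping. The real obstacle — and the only place where the non-compactness of $M$ genuinely enters — is the vanishing of $R_{s,l}$ and of the temporal boundary term at $t=s$: the former needs the spatial decay of the Poisson integrals of $f,\omega$ weighed against $e^{-\varphi}\wrt\mu_0$, hence a volume-growth estimate from the lower Bakry-Emery bound; the latter, since $f$ is an arbitrary element of $C^\infty_c(M)$ and not of ${\rm R}(a^2\cI+\cL_\varphi)$, requires a genuine decay rate for $(a^2\cI+\cL_\varphi)^{1/2}P^a_t$ as $t\to\infty$ rather than the faster decay available in Corollary~\ref{c: 2}.
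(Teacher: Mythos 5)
Your route is genuinely different from the paper's, and the difference opens a gap. You integrate by parts once in the spatial variable, turning $\int_M\chi_l\,(-\cL_\varphi b_{s,l})\,\wrt\mu_\varphi$ into the flux $R_{s,l}=-\int\langle\nabla b_{s,l},\nabla\chi_l\rangle\,\wrt\mu_\varphi\,t\wrt t$, and to make $R_{s,l}=O(1/l)$ you must control $\|\nabla b_{s,l}(\cdot,t)\|_{L^1(\mu_\varphi)}$ uniformly on $[1/s,s]$. Via Theorem~\ref{t: regular bellman} (\ref{brcko'}') and H\"older this splits into pieces of the type $(P^a_t|f|)^{p-1}|\nabla P^a_tf|$, $P^0_t|\omega|\cdot|\nabla P^a_tf|$, and $(P^0_t|\omega|)^{q-1}|\nabla\vec P^a_t\omega|$. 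The first two are fine, since $\nabla P^a_tf=\wrt P^a_tf=\vec P^a_t\wrt f$ by Lemma~\ref{l: commutativita} \eqref{fitzgerald}, which lies in $L^p$ uniformly by \eqref{dr.j}. The third involves the \emph{full covariant derivative} $\nabla\vec P^a_t\omega$, a $(0,2)$-tensor, and neither Lemma~\ref{l: commutativita} nor any commutation relation in the paper gives an $L^q$ bound for it uniform in $t$. Your fallback is ``Gaussian-type off-diagonal bounds'' for the Poisson kernel and its gradient, but under a mere lower Bakry-Emery bound (no volume doubling, no sign condition on ${\rm Ric}_\varphi$) such bounds are not available; indeed the paper is careful to avoid any pointwise kernel information.

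The paper sidesteps the difficulty by never differentiating $b_{s,l}$ in $x$. It takes the specific cutoff $F_l=\Lambda(r^2/l^2)$ built from the distance function and invokes the Bakry-Emery Laplacian comparison \eqref{e: 8} to obtain the one-sided pointwise inequality $-\cL_\varphi F_l\geqslant -C\,\chi_{B(o,2l)\setminus B(o,l)}$; pairing this with the upper bound $b_{s,l}\leqslant R:=\tfrac{1+\delta}{2}(1+\epsilon)^p\big(P^a_t|f|^p+P^0_t|\omega|^q\big)$ of Lemma~\ref{l: 2} gives the $\liminf\geqslant0$ half of \eqref{e: 12} by dominated convergence. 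For the $\limsup\leqslant0$ half it writes $b_{s,l}=(b_{s,l}-R)+R$, handles the nonpositive part $b_{s,l}-R$ the same way, and for $R$ integrates by parts once in $x$; now $\wrt R=C(\vec P^a_t\wrt|f|^p+\vec P^0_t\wrt|\omega|^q)$ is a $1$-form of smooth compactly supported data, whose $L^1$ norm is controlled by Lemma~\ref{l: commutativita} \eqref{barkley}, \eqref{fitzgerald}. Both ingredients---the distance-function cutoff (to make $\cL_\varphi F_l$ one-sided via comparison geometry) and the majorant $R$ (to replace $\nabla b_{s,l}$ by a tractable $1$-form)---are essential, and a generic $\chi_l$ with $|\nabla\chi_l|\leqslant2/l$ supplies neither. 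Your treatment of the temporal boundary terms, by contrast, is essentially the paper's (spectral theorem in $L^2$ plus analyticity of the Poisson semigroups on $L^r$) and is correct.
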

\begin{proof}
Recall that $o\in M$ was fixed at the beginning of this section. Set $r(x)=\rho(x,o)$, where $\rho$ denotes the geodesic distance on $M$.
Thus $B(o,\delta)=\mn{x\in M}{r(x)<\delta}$.  Take a nonincreasing function $\Lambda\in C_c^\infty([0,\infty))$ such that $0\leqslant\Lambda\leqslant 1$, $\Lambda=1$ in $[0,1]$ and $\Lambda=0$ in $[2,\infty)$. For $l>0$ and $x\in M$ define
\begin{equation}
\label{whereimgoing}
F_l(x)=\Lambda\left(\frac{r(x)^2}{l^2}\right)\,.
\end{equation}
Observe that
$(\text{supp}\,F_l)\times[1/s,s]\subset K_{s,l}$. By Proposition~\ref{p: 2}, $-\cL^{'}_\varphi b_{s,l}\geqslant 0$, so that
\begin{eqnarray*}
\aligned
\int_{1/s}^s \int_{B(o,l)}-\cL^{'}_\varphi b_{s,l}(x,t)\wrt\mu_\varphi(x)\,t\wrt t
& \leqslant\int^{s}_{1/s}\int_{M}-\cL^{'}_\varphi b_{s,l}(x,t)F_l(x)\wrt\mu_\varphi(x)\,t\wrt t\\
& =\int^s_{1/s}\int_{M}(\partial^2_{tt}-\cL_\varphi )b_{s,l}(x,t)F_l(x)\wrt\mu_\varphi(x)\,t\wrt t\,.
\endaligned
\end{eqnarray*}
Therefore, to complete the proof it suffices to show that
\begin{equation}
\label{e: 11}
\limsup_{s\rightarrow\infty}\limsup_{l\rightarrow \infty}\int^s_{1/s}\int_{M}\partial^2_{tt}b_{s,l}(x,t)F_l(x)\wrt\mu_\varphi(x)\,t\wrt t\leqslant\frac{1+\delta}2\,(1+\epsilon)^{p} (\|f\|^p_p+\|\omega\|^q_q)
\end{equation}
and
\begin{equation}
\label{e: 12}
\lim_{l\rightarrow \infty}\int^s_{1/s}\int_{M}\cL_\varphi b_{s,l}(x,t)F_l(x)\wrt\mu_\varphi(x)\,t\wrt t=0\,
\end{equation}
for all $s>0$.

\medskip
We first prove \eqref{e: 11}. An integration by parts in the variable $t$ gives
$$
\int^s_{1/s}\partial^2_{tt}b_{s,l}(x,t)t\wrt t=s\partial_tb_{s,l}(x,s)-s^{-1}\partial_tb_{s,l}(x,1/s)+b_{s,l}(x,1/s)-b_{s,l}(x,s).
$$
Theorem~\ref{t: regular bellman} (\ref{anterija'}') and Lemma~\ref{l: 2} imply, for all $(x,t)\in K_{s,l}$,
$$
\aligned
b_{s,l}(x,1/s)-b_{s,l}(x,s)&\leqslant b_{s,l}(x,1/s)\\
& \leqslant \frac{1+\delta}2\,(1+\epsilon)^{p}\left(P^a_{1/s}|f(x)|^p+P^0_{1/s}|\omega(x)|^q\right)\,.
\endaligned
$$
It follows that
$$
\aligned
& \ \hskip -20pt
\int^s_{1/s}\int_{M}\partial^2_{tt}b_{s,l}(x,t)F_l(x)\wrt\mu_\varphi(x)\,t\wrt t\\
&\leqslant \frac{1+\delta}2\,(1+\epsilon)^p\left( \|f\|^p_p+\|\omega\|^q_q\right)
+\nor{s\partial_tb_{s,l}(x,s)-s^{-1}\partial_tb_{s,l}(x,1/s)}_{L^1(\mu_\varphi)},
\endaligned
$$
where in the last inequality we used the fact that for every $r\in[1,\infty]$ the semigroup $P^a_t$ is contractive on $L^r$. Therefore, in order to prove \eqref{e: 11} it is enough to show that
\begin{equation}\label{e: 10}
\lim_{s\rightarrow\infty}
\nor{s\partial_tb_{s,l}(x,s)-s^{-1}\partial_tb_{s,l}(x,1/s)}_{L^1(\mu_\varphi)}=0\,.
\end{equation}
Since the semigroup $P^a_t$ is contractive in $L^r$ for all $r\in [1,\infty]$, there exists $h=h(p)>2$ such that
$$
\nor{(P^a_t|f|)^{p-1}+P^0_t|\omega|+(P^0_t|\omega|)^{q-1}}_{L^h(\mu_\varphi)}\leqslant C(f,\omega,p)
$$
uniformly in $t>0$. Hence, by Lemma \ref{l: 2} and H\"older's inequality, to prove \eqref{e: 10} it suffices to show that
\begin{equation}
\label{masaaki}
\lim_{t\rightarrow 0,\infty}
\Nor{|t\partial_tP^a_tf|+|t\partial_t\vec P^a_t\omega|}_{L^{h'}(\mu_\varphi)}=0\,,
\end{equation}
where $h'$ is the conjugate exponent of $h$. To prove \eqref{masaaki}, simply observe that by the spectral theorem $t\partial_tP^a_tf$ and $t\partial_t\vec{P^a_t}\omega$ converge to $0$ in $L^2$ as $t\rightarrow 0,\infty$, and that $\|t\partial_tP^a_t\|_r+\|t\partial_t\vec P^a_t\|_r$ is uniformly bounded in $t$ for all $r$ in $(1,\infty)$ \cite[Theorem 4.6 (c)]{EN}, because ${\rm Ric}_\varphi\geqslant -a^2g$ implies that the semigroups $P^a_t$ and $\vec P^a_t$ are both analytic on $L^r(\mu_\varphi)$, for all $r$ in $(1,\infty)$ .

\medskip
We now prove \eqref{e: 12}. Since $\|\wrt r\|_\infty\leqslant 1$ and $F_l=0$ for $r\geqslant 2l$, we have that
\begin{equation}\label{eq: 10}
\|\wrt
F_l
\|_\infty\leqslant  \frac{4\|\Lambda'\|_\infty}{l}\,.
\end{equation}
Since $F_l$ is a compactly supported Lipschitz function, by \cite[Lemma 2.5]{HE} it belongs to the first-order Sobolev space over $L^2(\mu_\varphi)$. Therefore, an integration by parts gives
$$
\int^s_{1/s}\int_{M}\cL_\varphi b_{s,l}(x,t)F_l(x)\wrt\mu_\varphi(x)\,t\wrt t=\int^s_{1/s}\int_{M}\langle\wrt b_{s,l}(x,t),\wrt F_l(x)\rangle\wrt\mu_\varphi(x)\,t\wrt t.
$$
We are not able to deduce \eqref{e: 12} directly from \eqref{eq: 10} and the above formula, because under our curvature assumption we do not have any good estimate of $|\wrt b_{s,l}|$. This problem can be fixed by performing another integration by parts.

Since ${\rm Ric}_\varphi\geqslant -a^2g$, by \cite[Theorem 3.1]{wei and wylie} (see also \cite[Theorem 2.4]{pigola} for the unweighted case) we have that
\begin{equation}\label{e: 8}
-\cL_\varphi r(x)\leqslant C\max\left\{\frac{1}{r(x)},r(x)\right\},
\end{equation}
for all $x\in M\setminus({\rm cut}(o)\cup\{o\})$, where ${\rm cut}(o)$ denotes the cut locus of the point $o$. A simple computation based on \eqref{whereimgoing} gives
$$
-\cL_\varphi
F_l(x)=-\frac{2r(x)\cL_\varphi r(x)}{l^2}\Lambda'(r^2/l^2)+\frac{4r^2}{l^4}|\wrt r(x)|^2\Lambda''(r^2(x)/l^2),
$$
for all $x\in M\setminus({\rm cut}(o)\cup\{o\})$. Since $\|\wrt r\|_\infty\leqslant 1$, $\Lambda'\leqslant 0$, and $\Lambda'=0$ on $[0,1]\cup[2,\infty]$, by \eqref{e: 8} there exists $C>0$ such that
\begin{equation}\label{eq: 5}
-\cL_\varphi F_l(x)
\geqslant -C\left(\|\Lambda'\|_\infty +\|\Lambda''\|_{\infty}\right)\chi_{B(o,2l)\setminus B(o,l)}\,
\end{equation}
for all $x\in M\setminus({\rm cut}(o)\cup\{o\})$ and all $l\geqslant 1$. Moreover, a simple modification of the argument used in the proof of \cite[Lemma 2.5]{pigola} shows that \eqref{eq: 5} holds weakly on $M$. In particular, we have
\begin{eqnarray*}
\aligned
\int_{M}-\cL_\varphi b_{s,l}(x,t)F_l(x)\wrt\mu_\varphi(x)\geqslant-C\int_{B(o,2l)\setminus B(o,l)}b_{s,l}(x,t)\wrt\mu_\varphi(x)\,.
\endaligned
\end{eqnarray*}
Hence the first inequality of Lemma~\ref{l: 2} implies that
\begin{eqnarray*}
\aligned
\int_{M}-\cL_\varphi b_{s,l}F_l\wrt\mu_\varphi(x)\geqslant-C\int_{B(o,2l)\setminus B(o,l)}(P^a_t|f|^p+P^0_t|\omega|^q)\wrt\mu_\varphi\,.
\endaligned
\end{eqnarray*}
 Denote the integral on the right-hand side by $\Psi_l(t)$. Since $\lim_{l\rightarrow\infty}\Psi_l=0$ pointwise on $\R_+$ and $0\leqslant \Psi_l(t)\leqslant\|f\|^p_p+\|\omega\|^q_q$, the Lebesgue dominated convergence theorem implies
\begin{equation}\label{e: 13}
\liminf_{l\rightarrow \infty}\int^s_{1/s}\int_{M}-\cL_\varphi b_{s,l}(x,t)F_l(x)\wrt\mu_\varphi(x)\,t\wrt t\geqslant 0.
\end{equation}

\medskip
It remains to prove that
\begin{equation*}
\limsup_{l\rightarrow \infty}\int^s_{1/s}\int_{M}-\cL_\varphi b_{s,l}(x,t)F_l(x)\wrt\mu_\varphi(x)\,t\wrt t\leqslant 0\,.
\end{equation*}
Consider the function $R=(1/2)(1+\delta)(1+\epsilon)^{p}(P^a_t|f|^p+P^0_t|\omega|^q)$. By Lemma~\ref{l: 2}, $b_{s,l}-R\leqslant 0$ on $K_{s,l}$, and an argument similar to the one we used to prove \eqref{e: 13} shows that
$$
\limsup_{l\rightarrow \infty}\int^s_{1/s}\int_{M}-\cL_\varphi (b_{s,l}(x,t)-R(x,t))F_l(x)\wrt\mu_\varphi(x)\,t\wrt t\leqslant 0.
$$
We now prove that
\begin{equation}\label{eq: 6}
\limsup_{l\rightarrow \infty}\int^s_{1/s}\int_{M}\cL_\varphi R(x,t)F_l(x)\wrt\mu_\varphi(x)\,t\wrt t= 0.
\end{equation}

By integrating by parts and using \eqref{eq: 10}, we get
$$
\mod{\int^s_{1/s}\int_{M}
\cL_\varphi R(x,t)F_l(x)\wrt\mu_\varphi(x)\,t\wrt t}\leqslant\frac{4\|\Lambda'\|_\infty}{l}\int^s_{1/s}\int_{M}|\wrt R(x,t)|\wrt\mu_\varphi(x)\,t\wrt t.
$$
By Lemma~\ref{l: commutativita},
$$
\wrt R(x,t)=C(\wrt P^a_t|f|^p+\wrt P^0_t|\omega|^q)=C(\vec{P^a_t}\wrt|f|^p+\vec{P^0_t}\wrt|\omega|^q)\,,
$$
where the right-hand side is in $L^1(M\times[1/s,s], \wrt\mu_\varphi \,t\wrt t)$ because $f$ and $\omega$ are regular, compactly supported and $|\vec{P^0_t}\eta|\leqslant e^{ta^2}P^0_t|\eta|$ for all $\eta\in C^\infty(T^*M)$. This implies \eqref{eq: 6} and concludes the proof of the proposition.
\end{proof}

\begin{proof}
[Proof of Theorem \ref{t: 3}.]
Suppose that $p\geqslant 2$. By combining Propositions \ref{p: 2} and \ref{p: 3}, using the Fatou lemma and passing to the limit as $\epsilon\rightarrow0$, we get
$$
2\delta\int^\infty_0\int_M|\overline{\nabla}P^a_tf||\overline{\nabla}\vec{P^a_t}\omega|\wrt\mu_\varphi\, t\wrt t\leq
\frac{1+\delta}2\,\big(\|f\|^p_p+\|\omega\|^q_q\big)\,.
$$
Now apply the above inequality to $\lambda f$ and $\lambda^{-1}\omega$ instead of $f,\omega$, respectively, and minimize in $\lambda>0$.
The result is
$$
\int^\infty_0\int_M|\overline{\nabla}P^a_tf||\overline{\nabla}\vec{P^a_t}\omega|\wrt\mu_\varphi\, t\wrt t\leq
C_q(p-1)\|f\|_p\|\omega\|_q\,,
$$
where
$$
C_q=\frac{8+q(q-1)}{4}\,(q-1)^{1/q-1}\,.
$$
The substitution $s=q-1$ returns
$$
\sup_{q\in (1,2]}C_q=\frac14\,\sup_{s\in (0,1]}(s^2+s+8)s^{-\frac{s}{s+1}}< 2^\cdot8<3\,.
$$

When $1<p<2$, interchange $P^a_tf$ and $\vec{P^a_t}\omega$ in the definition of $b_{\kappa_{s,l}}$ and proceed as before.
\end{proof}

\section*{Acknowledgements}
A part of this work was conducted at Centro Di Ricerca Matematica Ennio De Giorgi, Pisa, during the intensive research period in Euclidean Harmonic Analysis, Nilpotent Lie Groups and PDEs 2010, and during the first author's visit to the University of Ljubljana. The authors thank the faculty and staff of those institutions for their warm hospitality.

\end{document}